\numberwithin{equation}{section}
\renewcommand{\Re}{\mathop{\mathrm{Re}}}
\theoremstyle{plain}
\newtheorem{thm}{Theorem}[section]
\newtheorem{prop}{Proposition}[section]
\newtheorem{lemma}{Lemma}[section]
\theoremstyle{definition}
\newtheorem{definition}{Definition}[section]
\begin{document}

\begin{center}
    \Large\bfseries Characterization and inversion theorems\\ for a generalized Radon transform
\end{center}

\begin{center}
    \it A.\,D. Agaltsov\footnote{119991, Lomonosov Moscow State University, Faculty of Computational Mathematics and Cybernetics, Moscow, Russia; email: agalets@gmail.com}
\end{center}\medskip

\begin{quote}
\centerline{\bf Abstract}

In this paper the generalized Radon transform over level hypersurfaces of CES-functions of measures supported in positive orthant $\mathbb{R}^n_{+}$ is studied. A characterization of the generalized Radon transform of nonnegative measures is found. Explicit inversion formula for the generalized Radon transform of absolutely continuous measures is obtained.\medskip

\textbf{Key words:} generalized Radon transform, characterization, inversion formula, model of pure industry.
\end{quote}

\section{Introduction}
One of the actual problems in economic statistics is a need for taking into account the effect of substitution of production factors at the microlevel in production processes. Some production processes require resourses which are able to interchange one another. Appearance of this effect was accelerated by the processes of standartisation and globalisation. In the work~[Sh] A.~Shananin introduced the generalized model of pure industry which takes into account the described effect. The profit function in this model is closely related to a generalized Radon transform of measures. Study of properties of profit functions is important since the profit function is one of the main tools of macroscopic description of production systems.

Define on the set of nonegative numbers $a,b$ the operation
\begin{equation*}
    a \oplus_{\alpha} b = \left( a^{\alpha} + b^{\alpha} \right)^{\frac{1}{\alpha}}, \; 0 < \alpha \leqslant 1.
\end{equation*}
For $\alpha = 1$ it is the usual addition. Now define an analog of scalar product for two nonnegative vectors $p,x \in \mathbb R^n_+$ by the rule
\begin{equation*}
    p \odot_{\alpha} x = p_1 x_1 \oplus_{\alpha} p_2 x_2 \oplus_{\alpha} \ldots \oplus_{\alpha} p_n x_n.
\end{equation*}
In economics the map $x \mapsto p \odot_\alpha x$ is called constant elasticity of substitution (CES) function. In this paper we study the generalized Radon transform of signed Borel measures $\mu$ with support in positive orthant $\mathbb R^n_+$ which is given by the formula
\begin{equation}
    \label{def.radon}
    R_\alpha[\mu](p,p_0) = \frac{\partial}{\partial p_0} \int\limits_{ p \odot_{\alpha} x \leqslant p_0} \mu(dx), 
\end{equation}
where $p \in \mathbb R^n_+$, $p \neq 0$, $p_0 > 0$ and the derivative is taken in the sense of distribution theory. We also study the transform 
\begin{equation}
    \label{def.prod}
    {\Pi}_{\alpha}[\mu](p,p_0) = \int\limits_{\mathbb{R}^{n}_{+}} (p_0 - p \odot_{\alpha} x)_{+} \mu(dx),
\end{equation}
where $a_+ = \max(0,a)$. This transform plays the role of profit function in the generalized model of pure industry~[Sh]. Transforms~\eqref{def.radon} and~\eqref{def.prod} are closely related. Results obtained for one of them could be easily translated to the case of the other transform.

For transforms~\eqref{def.radon} and~\eqref{def.prod} we obtain inversion and characterization theorems. Before passing to these questions we show that in the case of absolutely continuous measures with continuous densities the generalized Radon transform $R_\alpha[\mu](p,p_0)$ is precisely an integral of the density over a level hypersurface of a CES-function.

\begin{prop}
Let a signed measure $\mu$ on $\mathbb R^n_+$ be absolutely continuous with continuous density $a(x)$. Let $\Omega_\alpha(p)$ be a differential form on $\mathbb R^n_+$ such that
\begin{equation*}
    d_{x}(p \odot_{\alpha} x) \wedge \Omega_{\alpha}(p) = dx_1 \wedge \ldots \wedge dx_n, \;\;\; p \neq 0.
\end{equation*}
Then the generalized Radon transform satisfies 
\begin{equation*}
    R_\alpha[\mu](p,p_0) = \int\limits_{ p \odot_{\alpha} x = p_0 } a(x) \, \Omega_{\alpha}(p), \;\;\; p \neq 0, \; p_0 > 0.
\end{equation*} 
\end{prop}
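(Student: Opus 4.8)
The plan is to identify the claimed right-hand side with the Gelfand--Leray (coarea) decomposition of the Lebesgue integral of the density $a$, and then to differentiate by the fundamental theorem of calculus. Write $\phi_p(x) = p\odot_\alpha x = \big(\sum_{i=1}^n (p_ix_i)^\alpha\big)^{1/\alpha}$ and put
\[
  F(p,p_0)=\int\limits_{\phi_p(x)\leqslant p_0}a(x)\,dx,\qquad
  G(p,p_0)=\int\limits_{\phi_p(x)=p_0}a(x)\,\Omega_\alpha(p).
\]
Since $\mu$ is a finite signed measure with density $a$, we have $a\in L^1(\mathbb R^n_+)$, so $F$ is well defined and, by~\eqref{def.radon}, $R_\alpha[\mu](p,p_0)=\partial_{p_0}F(p,p_0)$ in the sense of distributions. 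It therefore suffices to show that $p_0\mapsto G(p,p_0)$ is continuous on $(0,\infty)$ and that $F(p,p_0)=\int_0^{p_0}G(p,t)\,dt$; then $\partial_{p_0}F(p,p_0)=G(p,p_0)$ classically, hence distributionally, which is the assertion.

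First I would check that $G(p,p_0)$ is independent of the choice of $\Omega_\alpha(p)$: if two forms satisfy $d_x\phi_p\wedge\Omega=dx_1\wedge\dots\wedge dx_n$, their difference $\eta$ obeys $d_x\phi_p\wedge\eta=0$, hence $\eta$ is a multiple of the nowhere-vanishing $1$-form $d_x\phi_p$ (on the open orthant $\mathbb R^n_{++}=\{x_i>0\}$) and so pulls back to $0$ on every level hypersurface. A convenient representative is
\[
  \Omega_\alpha(p)=\sum_{i=1}^n\frac{(-1)^{i-1}\,\partial_{x_i}\phi_p(x)}{|\nabla_x\phi_p(x)|^2}\,dx_1\wedge\dots\wedge\widehat{dx_i}\wedge\dots\wedge dx_n,
\]
for which $d_x\phi_p\wedge\Omega_\alpha(p)=dx_1\wedge\dots\wedge dx_n$ by a direct computation, and whose restriction to $\{\phi_p=p_0\}$ equals $|\nabla_x\phi_p|^{-1}\,d\sigma$ (surface measure divided by the length of the gradient), provided the level set carries the orientation in which $\phi_p$ increases transversally. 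On $\mathbb R^n_{++}$ the function $\phi_p$ is $C^\infty$ with $\partial_{x_i}\phi_p=\phi_p^{1-\alpha}(p_ix_i)^{\alpha-1}p_i$, which is positive when $p_i>0$; hence for $p\neq 0$ the gradient never vanishes, $\phi_p$ is a submersion there, and the coefficients of $\Omega_\alpha(p)$ tend to $0$ as $x$ approaches any coordinate hyperplane (here $0<\alpha\leqslant1$ is used). Since the union of the coordinate hyperplanes is Lebesgue-null and meets each level set in a $\sigma$-null set, it is harmless below.

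Now comes the main step. By the coarea formula for the submersion $\phi_p$ on $\mathbb R^n_{++}$, together with the identification $\Omega_\alpha(p)|_{\{\phi_p=t\}}=|\nabla_x\phi_p|^{-1}d\sigma$, for every $\chi\in C_c(\mathbb R^n_+)$ one has
\[
  \int\limits_{\mathbb R^n_+}\chi(x)\,dx=\int_0^\infty\Big(\int\limits_{\phi_p(x)=t}\chi(x)\,\Omega_\alpha(p)\Big)\,dt;
\]
applying this to $|a|$ shows in particular that $t\mapsto\int_{\phi_p=t}|a|\,\Omega_\alpha(p)$ is locally integrable. Approximating $a\cdot\mathbf{1}_{\{\phi_p\leqslant p_0\}}$ in $L^1(\mathbb R^n_+)$ by such $\chi$ and passing to the limit gives $F(p,p_0)=\int_0^{p_0}G(p,t)\,dt$. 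It remains to prove continuity of $t\mapsto G(p,t)$: fixing $t_0>0$ and straightening $\phi_p$ by a local diffeomorphism near $\{\phi_p=t_0\}\cap\mathbb R^n_{++}$, the integral $G(p,t)$ becomes, for $t$ close to $t_0$, an integral over a fixed domain of an integrand depending continuously on $t$, while the contributions from thin neighbourhoods of the coordinate hyperplanes are uniformly small by the vanishing of the coefficients of $\Omega_\alpha(p)$ there; dominated convergence then yields continuity. This completes the proof.

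The step I expect to be the main obstacle is controlling the behaviour near the coordinate hyperplanes meeting $\mathbb R^n_+$, where $\phi_p$ fails to be smooth and $\nabla_x\phi_p$ blows up for $\alpha<1$, together with the non-compactness of the level sets when some $p_i=0$; both are overcome by the vanishing of the coefficients of $\Omega_\alpha(p)$ on level sets and by $a\in L^1$, but these estimates and the attendant limiting arguments are what require care. A minor, purely bookkeeping, point is the choice of orientation making the Gelfand--Leray form equal to the nonnegative density $|\nabla_x\phi_p|^{-1}d\sigma$ rather than its negative.
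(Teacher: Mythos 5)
Your proof is correct and follows essentially the same route as the paper: both reduce the claim to the coarea (Gelfand--Leray) identity $\int_{p\odot_\alpha x\leqslant p_0}a\,dx=\int_0^{p_0}\int_{p\odot_\alpha x=s}a\,\Omega_\alpha(p)\,ds$ and then differentiate in $p_0$. You additionally supply details the paper leaves implicit (well-definedness of the Leray form on level sets, the behaviour near the coordinate hyperplanes for $\alpha<1$, and the continuity in $p_0$ needed to pass from the integral identity to a classical derivative), but the underlying argument is the same.
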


\begin{proof}
By the coarea formula (see [KP]) we have
\begin{equation*}
    \int\limits_{ p \odot_{\alpha} x \leqslant p_0} a(x) \, dx = \int\limits_{0}^{p_0} \int\limits_{ p \odot_{\alpha} x = s} a(x) \, \Omega_{\alpha}(p) \, ds.
\end{equation*}
Taking into account the definition of the generalized Radon transform we obtain
\begin{equation*}
    R_\alpha[\mu](p,p_0) = \frac{\partial}{\partial p_0} \int\limits_{ p \odot_{\alpha} x \leqslant p_0 } a(x) \, dx = \int\limits_{ p \odot_{\alpha} x = s} a(x) \, \Omega_{\alpha}(p).
\end{equation*}
Proposition is proved.
\end{proof}

\section{Characterization.}
Before passing to the characterization theorems we give some definitions that will be used in future.

\begin{definition} A distribution $T \in \mathscr D'(0,\infty)$ is called nonnegative if for any test function $\varphi \in \mathscr D(0,\infty)$, $\varphi \geqslant 0$, we have $\langle T, \varphi \rangle \geqslant 0$.
\end{definition}

\begin{definition} Let $X_1 = (\Omega_1,\mathcal{F}_1)$ and $X_2 = (\Omega_2,\mathcal{F}_2)$ be two measurable spaces, $f \colon \Omega_1 \to \Omega_2$ be a measurable map, $\mu$ be a measure on $X_1$. Then the measure $f_\ast \mu$ on $X_2$ defined for each $F_2 \in \mathcal{F}_2$ by the rule $f_\ast \mu(F_2) = \mu(f^{-1}(F_2))$ is called pushforward of measure $\mu$.
\end{definition}

\begin{definition} A function $F \in C^{\infty}( \mathop{\mathrm{int}} \mathbb{R}^{n}_{+}, \mathbb{R})$ is called completely monotone if for any $\alpha \in \mathbb{Z}^n_{+}$ the inequality
\begin{equation*}
    (-1)^{|\alpha|} \frac{\partial^{|\alpha|} F(p)}{\partial^{\alpha} p} \geqslant 0,
\end{equation*}
holds for any $p \in \mathop{\mathrm{int}} \mathbb{R}^{n}_{+}$.
\end{definition}

The proofs of our characterization theorems are based on the next two theorems.

\begin{thm}[see \lbrack LL\rbrack] \label{agal.th.nonneg}
 Let $T \in \mathscr D'(0,\infty)$ be a nonnegative distribution. Then there is a unique locally finite Borel measure $\nu$ on $(0,\infty)$ such that for any $\varphi \in \mathscr D(0,\infty)$ the following equality holds:
\begin{equation*}
    \langle T, \varphi \rangle = \int\limits_{0}^{\infty} \varphi(\tau) \, \nu(d\tau).
\end{equation*}
\end{thm}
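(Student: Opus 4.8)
The plan is to extend $T$ from $\mathscr D(0,\infty)$ to a positive linear functional on the space $C_c(0,\infty)$ of continuous compactly supported functions, and then to invoke the Riesz--Markov--Kakutani representation theorem to produce $\nu$.

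First I would set up a cutoff-and-mollify argument. Fix $\varphi \in C_c(0,\infty)$, let $K = \operatorname{supp}\varphi$, and choose an open set $U$ with $K \subset U$ and $\overline U$ a compact subset of $(0,\infty)$. Pick $\chi \in \mathscr D(0,\infty)$ with $0 \leqslant \chi \leqslant 1$ and $\chi \equiv 1$ on $\overline U$. Mollifying $\varphi$ with a standard mollifier of sufficiently small support produces $\psi_j \in \mathscr D(0,\infty)$, all supported in $U$, with $\psi_j \to \varphi$ uniformly; moreover $\psi_j \geqslant 0$ whenever $\varphi \geqslant 0$. Since $\psi_i - \psi_j$ is supported in $\overline U$ where $\chi \equiv 1$, we have the pointwise bound $-\|\psi_i - \psi_j\|_\infty\, \chi \leqslant \psi_i - \psi_j \leqslant \|\psi_i - \psi_j\|_\infty\, \chi$, and applying the nonnegative distribution $T$ (which is monotone on real test functions) gives $|\langle T, \psi_i\rangle - \langle T, \psi_j\rangle| \leqslant \bigl( \|\varphi-\psi_i\|_\infty + \|\varphi - \psi_j\|_\infty \bigr)\,\langle T,\chi\rangle$. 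Hence $(\langle T,\psi_j\rangle)_j$ is Cauchy, and I would define $\widetilde T(\varphi) := \lim_j \langle T, \psi_j\rangle$.

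Next I would check that $\widetilde T$ is well defined (independent of the auxiliary data $U$, $\chi$ and the choice of mollifier, again by the same squeezing inequality applied to the difference of two approximating sequences), linear, an extension of $T$, and positive: if $\varphi \geqslant 0$ one may take $\psi_j \geqslant 0$, so $\langle T,\psi_j\rangle \geqslant 0$ and the limit is nonnegative. Thus $\widetilde T$ is a positive linear functional on $C_c(0,\infty)$, and the Riesz--Markov--Kakutani theorem yields a unique Radon measure $\nu$ on $(0,\infty)$ with $\widetilde T(\varphi) = \int_0^\infty \varphi\, \nu(d\tau)$ for all $\varphi \in C_c(0,\infty)$, in particular for all $\varphi \in \mathscr D(0,\infty)$. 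Since $(0,\infty)$ is locally compact, $\sigma$-compact and metrizable, ``Radon'' and ``locally finite Borel'' coincide here and such a measure is determined by its integrals against $C_c(0,\infty)$; this gives uniqueness, because if $\nu'$ is another locally finite Borel measure with $\int \varphi\,\nu'(d\tau) = \langle T,\varphi\rangle$ for all $\varphi \in \mathscr D(0,\infty)$, then approximating an arbitrary $f \in C_c(0,\infty)$ uniformly by test functions supported in a fixed compact neighbourhood of $\operatorname{supp} f$ (on which both $\nu$ and $\nu'$ are finite) forces $\int f\, \nu(d\tau) = \int f\, \nu'(d\tau)$, whence $\nu = \nu'$.

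The main obstacle is entirely in the extension step: one must ensure that the mollification keeps supports inside the region where the cutoff $\chi$ equals $1$ and preserves nonnegativity, and that the resulting functional $\widetilde T$ genuinely does not depend on any of the choices. Everything after that reduces to quoting the Riesz--Markov--Kakutani theorem together with the automatic regularity of locally finite Borel measures on this space; this is precisely the content of the cited reference [LL].
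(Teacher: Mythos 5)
The paper does not prove this theorem at all: it is quoted from the reference [LL] (Lieb--Loss, \emph{Analysis}), where it appears as the standard fact that nonnegative distributions are measures. Your cutoff-and-mollification extension to a positive linear functional on $C_c(0,\infty)$ followed by Riesz--Markov--Kakutani is precisely the argument given there, and it is correct as sketched.
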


\begin{thm}[S. Bernstein, V. Hilbert, see \lbrack HS\rbrack] \label{agal.th.bern}
Let $F(p) \colon \mathbb{R}^n_{+} \to \mathbb{R}$ be a bounded on $\mathbb{R}^n_{+}$ function such that $F(p)$ is completely monotone on $\mathop{\mathrm{int}} \mathbb{R}^n_{+}$. Then there is a unique measure $\mu$ supported in $\mathbb{R}^n_{+}$ such that for any $p \in \mathbb{R}^n_{+}$ we have
\begin{equation*}
    F(p) = \int\limits_{\mathbb{R}^n_{+}} e^{-p_1 x_1 - \ldots - p_n x_n} \mu(dx).
\end{equation*}
\end{thm}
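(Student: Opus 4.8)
The statement is the several--variable Bernstein--Hilbert (Hausdorff--Bernstein--Widder) theorem, and my plan is to mirror the classical proof of the one--dimensional Bernstein--Widder representation. First I would construct explicit atomic approximations to the desired measure via the $n$--dimensional analogue of the Post--Widder inversion formula: for $t>0$ put
\[
  \mu_t \;=\; \sum_{k\in\mathbb Z^n_{+}}\frac{(-t)^{|k|}}{k!}\,(\partial^{k}F)(t,\dots,t)\;\delta_{(k_1/t,\dots,k_n/t)},
\]
where $|k|=k_1+\dots+k_n$, $k!=k_1!\cdots k_n!$, $\partial^{k}=\partial_1^{k_1}\cdots\partial_n^{k_n}$, and $\delta_y$ is the unit mass at $y$. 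Complete monotonicity of $F$ says precisely that every weight here is nonnegative, so $\mu_t$ is a nonnegative measure supported in $\mathbb R^n_{+}$; and multivariate Taylor's formula, whose remainder on the box $[0,t]^n$ is again nonnegative by complete monotonicity, shows that the finite partial sums increase and stay bounded by $F(0,\dots,0)$, so that $\mu_t$ has total mass at most $\|F\|_{\infty}$. This is the step where the boundedness hypothesis on $F$ is genuinely used.

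Next I would pass to the limit $t\to\infty$. Since the family $\{\mu_t\}_{t\ge 1}$ has uniformly bounded total mass, a Helly--type (vague, i.e. weak-$\ast$) selection argument produces a sequence $t_j\to\infty$ along which $\mu_{t_j}$ converges vaguely to a nonnegative finite measure $\mu$ on $\mathbb R^n_{+}$. Because $x\mapsto e^{-p\cdot x}$ belongs to $C_0(\mathbb R^n_{+})$ for $p\in\mathrm{int}\,\mathbb R^n_{+}$ and the total masses are uniformly bounded, vague convergence already yields $\int e^{-p\cdot x}\,\mu_{t_j}(dx)\to\int e^{-p\cdot x}\,\mu(dx)$, so there is no separate tightness issue to worry about at this stage.

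It remains to identify this limit with $F(p)$, and here lies the main technical point. A direct computation gives
\[
  \int_{\mathbb R^n_{+}}e^{-p\cdot x}\,\mu_t(dx)
  =\sum_{k\in\mathbb Z^n_{+}}\frac{\prod_{i}\bigl(-t\,e^{-p_i/t}\bigr)^{k_i}}{k!}\,(\partial^{k}F)(t,\dots,t),
\]
which is the Taylor series of $F$ about $(t,\dots,t)$ evaluated at the point whose $i$-th coordinate is $t\bigl(1-e^{-p_i/t}\bigr)$. A completely monotone function is real--analytic, and --- as one checks directly from the definition via Taylor's formula with remainder and the complete monotonicity of the derivatives --- its Taylor expansion about a point $q\in\mathrm{int}\,\mathbb R^n_{+}$ converges to it on the polydisc of polyradius $(q_1,\dots,q_n)$; since $t\bigl(1-e^{-p_i/t}\bigr)\in(0,t)$ for $p_i>0$, this applies and the series above equals $F\bigl(t(1-e^{-p_1/t}),\dots,t(1-e^{-p_n/t})\bigr)$. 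Letting $t\to\infty$ along $t_j$ and using continuity of $F$ on $\mathrm{int}\,\mathbb R^n_{+}$, the right--hand side tends to $F(p)$, so $F(p)=\int e^{-p\cdot x}\,\mu(dx)$ for every $p\in\mathrm{int}\,\mathbb R^n_{+}$; letting $p\to 0$ then gives $\mu(\mathbb R^n_{+})=F(0)$ by monotone convergence together with continuity of $F$ at the origin, which extends the representation to the whole of $\mathbb R^n_{+}$. I expect the bookkeeping around the several--variable Taylor convergence (the domain of convergence of the Taylor series of a completely monotone function) to be the most delicate part. An alternative to this last step is to apply the one--dimensional Bernstein--Widder theorem one variable at a time, using the Widder inversion formula to check that the family of one--dimensional representing measures obtained in the first variable depends completely monotonically on the remaining variables.

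Finally, uniqueness is the easy part: if $\mu$ and $\mu'$ both represent $F$, then $\int g\,d\mu=\int g\,d\mu'$ for every $g$ in the subalgebra of $C_0(\mathbb R^n_{+})$ generated by $\{e^{-p\cdot x}:p\in\mathbb R^n_{+}\}$, which separates points and vanishes nowhere simultaneously and is therefore dense by the Stone--Weierstrass theorem (equivalently, perform the change of variables $x\mapsto(e^{-x_1},\dots,e^{-x_n})$, which sends $\mathbb R^n_{+}$ into $[0,1]^n$, and invoke uniqueness in the classical moment problem); hence $\mu=\mu'$.
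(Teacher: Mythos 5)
The paper does not prove this theorem at all: it is imported as a known result (Bernstein--Hilbert, cited from [HS]) and used as a black box in the proofs of Theorems 2.3 and 2.4, so there is no in-paper argument to compare yours against. On its own merits, your proof is the standard multidimensional Bernstein--Hausdorff--Widder argument (Post--Widder atomic approximants $\mu_t$, vague compactness, identification of the Laplace transform of $\mu_t$ with a Taylor series of $F$ evaluated at $t(1-e^{-p_i/t})$, Stone--Weierstrass for uniqueness), and the skeleton is sound: the weights of $\mu_t$ are nonnegative exactly by complete monotonicity, the computation of $\int e^{-p\cdot x}\,\mu_t(dx)$ is correct, $t(1-e^{-p_i/t})\to p_i$, and vague convergence plus uniformly bounded masses does suffice to pass to the limit against $e^{-p\cdot x}\in C_0$ for interior $p$.

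Two caveats. First, the steps you yourself flag as delicate are genuinely where the work lives, and in several variables they are not quite one-liners: the claim that the partial sums of the Taylor expansion about $(t,\dots,t)$ are bounded by values of $F$, and that the full series converges to $F$ on the polydisc of polyradius $(t,\dots,t)$, cannot be read off from a single ``box'' remainder formula with an evident sign; one has to iterate the one-variable Widder argument coordinate by coordinate (expand in $x_1$ with nonnegative remainder, then expand each coefficient in $x_2$, and so on), using boundedness of $F$ to control $\frac{u^{N}}{N!}\bigl|(\partial^N f)(u)\bigr|$ uniformly. This is exactly the content of the classical proof and should be written out if the proof is to stand alone. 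Second, the extension of the representation from $\mathop{\mathrm{int}}\mathbb{R}^n_{+}$ to all of $\mathbb{R}^n_{+}$ requires continuity of $F$ up to the boundary, which you invoke (``continuity of $F$ at the origin'') but which is not among the stated hypotheses; as literally stated the theorem is false without it (alter a valid $F$ only on the boundary: it stays bounded and completely monotone on the interior, yet $\int e^{-p\cdot x}\mu(dx)$ is forced to equal $\lim_{q\to p,\,q\in\mathrm{int}}F(q)$ at boundary points by monotone convergence). This is a defect of the quoted statement rather than of your argument, but you should make the extra hypothesis explicit rather than smuggle it in.
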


Now we pass to the characterization theorems. The next theorem characterizes the generalized Radon transforms of finite nonnegative measures supported in $\mathbb R^n_+$.

\begin{thm}\label{agal.th.charrad} A distribution $\ae(p,\cdot) \in \mathscr D'(0,\infty)$, $p \in \mathbb{R}^{n}_{+}$, $p \neq 0$ can be represented in the form
    \begin{equation*}
        \ae(p,p_0) = R_\alpha[\mu](p,p_0),
    \end{equation*}
where $\mu$ is a nonnegative finite Borel measure supported in $\mathbb{R}^{n}_{+}$ and absolutely continuous at zero (i.e. $\mu(\{0\})=0$) if and only if
\begin{enumerate}
    \item $\ae(p,\cdot) \geqslant 0$,
    \item $\lambda \ae(\lambda p, \lambda p_0) = \ae(p,p_0)$ for any $\lambda>0$,
    \item the function
       \begin{equation*}
          F(p) = \int\limits_{0}^{\infty} e^{-\tau^{\alpha}} \ae(p_1^{\frac{1}{\alpha}},\ldots,p_n^{\frac{1}{\alpha}};\tau) d\tau,
       \end{equation*}
       is bounded on $\mathbb{R}^n_{+}$ and completely monotone on $\mathop{\mathrm{int}} \mathbb{R}^{n}_{+}$ and $\lim\limits_{\lambda \to +\infty}F(\lambda p)=~0$ for any $p \neq 0$.
\end{enumerate}
\end{thm}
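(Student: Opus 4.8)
The plan is to prove both implications by passing, through a single change of variables, to the classical Bernstein--Hilbert theorem (Theorem~\ref{agal.th.bern}) and to the representation of nonnegative distributions by measures (Theorem~\ref{agal.th.nonneg}). The algebraic fact that drives everything is that the substitution $y=\Phi(x):=(x_1^\alpha,\dots,x_n^\alpha)$, a homeomorphism of $\mathbb R^n_{+}$ onto itself, turns the CES--exponential into an ordinary one: if $q_i=p_i^{1/\alpha}$, then
\[
    \bigl(q\odot_{\alpha}x\bigr)^{\alpha}\;=\;\sum_{i=1}^{n}q_i^{\alpha}x_i^{\alpha}\;=\;\sum_{i=1}^{n}p_i\,y_i\;=\;p\cdot y .
\]
As a preliminary step I would record the elementary reformulation of $R_\alpha$: for fixed $p\neq0$ the function $p_0\mapsto\mu\{x:p\odot_{\alpha}x\le p_0\}$ is nondecreasing and right--continuous, so $R_\alpha[\mu](p,\cdot)$ is nothing but the restriction to $(0,\infty)$ of the push--forward $(x\mapsto p\odot_{\alpha}x)_{\ast}\mu$; equivalently $\langle R_\alpha[\mu](p,\cdot),\varphi\rangle=\int_{\mathbb R^n_{+}}\varphi(p\odot_{\alpha}x)\,\mu(dx)$ for $\varphi\in\mathscr D(0,\infty)$, the part of $\mu$ carried by the face $\{p\odot_{\alpha}x=0\}$ being invisible to test functions on $(0,\infty)$.

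\emph{Necessity.} Suppose $\ae=R_\alpha[\mu]$. Condition~(1) is immediate from monotonicity of the cumulative function. Condition~(2) follows from $(\lambda p)\odot_{\alpha}x=\lambda(p\odot_{\alpha}x)$, which gives $\mu\{x:(\lambda p)\odot_{\alpha}x\le\lambda p_0\}=\mu\{x:p\odot_{\alpha}x\le p_0\}$ and hence, after differentiating in $p_0$, the stated scaling identity. For condition~(3), the preliminary step with $q=(p_1^{1/\alpha},\dots,p_n^{1/\alpha})$ and the displayed identity give
\[
    F(p)=\int_0^{\infty}e^{-\tau^{\alpha}}\,\ae(q,\tau)\,d\tau=\int_{\{q\odot_{\alpha}x>0\}}e^{-(q\odot_{\alpha}x)^{\alpha}}\,\mu(dx)=\int_{\{q\odot_{\alpha}x>0\}}e^{-p\cdot\Phi(x)}\,\mu(dx),
\]
which for $p\in\mathop{\mathrm{int}}\mathbb R^n_{+}$ is exactly the Laplace transform of the finite measure $\Phi_{\ast}\mu$ (there the excluded face reduces to $\{0\}$, which is $\mu$--null by hypothesis), and in all cases is bounded by $\mu(\mathbb R^n_{+})$; differentiation under the integral sign on $\mathop{\mathrm{int}}\mathbb R^n_{+}$ then yields complete monotonicity, and along any ray $\lambda\mapsto\lambda p$ with $p\neq0$ the integrand $e^{-\lambda(q\odot_{\alpha}x)^{\alpha}}\to0$ on $\{q\odot_{\alpha}x>0\}$, so dominated convergence gives $F(\lambda p)\to0$.

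\emph{Sufficiency.} Assume (1)--(3). By Theorem~\ref{agal.th.nonneg} each $\ae(p,\cdot)$ is a locally finite measure $\nu_p$ on $(0,\infty)$, and condition~(2) is equivalent to $\nu_{\lambda p}=(\tau\mapsto\lambda\tau)_{\ast}\nu_p$. By condition~(3) and Theorem~\ref{agal.th.bern} there is a unique finite measure $\nu$ on $\mathbb R^n_{+}$ (finite because $F(0)=\nu(\mathbb R^n_{+})<\infty$) with $F(p)=\int_{\mathbb R^n_{+}}e^{-p\cdot y}\,\nu(dy)$; set $\mu:=(\Phi^{-1})_{\ast}\nu$, a finite nonnegative measure with $\mu(\{0\})=\nu(\{0\})=\lim_{\lambda\to\infty}F(\lambda\mathbf 1)=0$. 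It remains to check $R_\alpha[\mu]=\ae$, i.e.\ $(x\mapsto q\odot_{\alpha}x)_{\ast}\mu=\nu_q$ on $(0,\infty)$ for every $q\neq0$. Fix $q$, put $p=(q_1^{\alpha},\dots,q_n^{\alpha})$, and compute for $s>0$: on one hand, via $\mu=(\Phi^{-1})_{\ast}\nu$ and the displayed identity,
\[
    \int_{[0,\infty)}e^{-s\sigma}\,d\bigl[(x\mapsto(q\odot_{\alpha}x)^{\alpha})_{\ast}\mu\bigr](\sigma)=\int_{\mathbb R^n_{+}}e^{-s\,p\cdot y}\,\nu(dy)=F(sp);
\]
on the other hand, using $(sp)^{1/\alpha}=s^{1/\alpha}q$ together with the homogeneity $\nu_{s^{1/\alpha}q}=(\tau\mapsto s^{1/\alpha}\tau)_{\ast}\nu_q$ from condition~(2),
\[
    F(sp)=\int_0^{\infty}e^{-\tau^{\alpha}}\,\nu_{s^{1/\alpha}q}(d\tau)=\int_0^{\infty}e^{-s\tau^{\alpha}}\,\nu_q(d\tau)=\int_{[0,\infty)}e^{-s\sigma}\,d\bigl[(\tau\mapsto\tau^{\alpha})_{\ast}\nu_q\bigr](\sigma).
\]
Both measures here are finite (the second because its total mass equals $\lim_{s\to0^{+}}F(sp)\le\sup_{\mathbb R^n_{+}}F<\infty$), so injectivity of the one--dimensional Laplace transform forces $(x\mapsto(q\odot_{\alpha}x)^{\alpha})_{\ast}\mu=(\tau\mapsto\tau^{\alpha})_{\ast}\nu_q$; since $t\mapsto t^{\alpha}$ is a homeomorphism of $(0,\infty)$, push--forward along it is injective, and restricting to $(0,\infty)$ yields $(x\mapsto q\odot_{\alpha}x)_{\ast}\mu=\nu_q$ there, that is, $R_\alpha[\mu]=\ae$.

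The main obstacle is the sufficiency direction: the hypotheses hand us, \emph{separately}, a family of one--dimensional measures $\nu_p$ (from the Radon data via Theorem~\ref{agal.th.nonneg}) and a single $n$--dimensional measure $\nu$ (from Theorem~\ref{agal.th.bern} applied to $F$), and one must show these a priori unrelated objects are assembled by one measure $\mu$ as $R_\alpha[\mu]$. The device that resolves this is to exploit the scaling of condition~(2) to collapse, along each direction $q$, the $n$--dimensional Laplace--transform identity into a one--dimensional one, where injectivity of the Laplace transform applies. Secondary points that need attention, but are routine, are: keeping track of the fact that $R_\alpha[\mu]$ as a distribution on $(0,\infty)$ ignores the mass of $\mu$ on the faces $\{p\odot_{\alpha}x=0\}$ (which is exactly why the bare hypothesis $\mu(\{0\})=0$ suffices); and justifying the distribution--theoretic manipulations (realizing distributions as measures, dilations, pairing with the weight $e^{-\tau^{\alpha}}$ which is not compactly supported), all legitimate once the relevant distributions are known to be finite measures.
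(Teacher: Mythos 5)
Your proposal is correct and follows essentially the same route as the paper: necessity via the pushforward formula $\langle \ae(p,\cdot),\varphi\rangle=\int\varphi(p\odot_\alpha x)\,\mu(dx)$ and the substitution $y_k=x_k^\alpha$ turning $F$ into a Laplace transform, and sufficiency by combining Theorem~\ref{agal.th.nonneg} (the family $\nu_q$) with Theorem~\ref{agal.th.bern} (the measure $\nu$, pulled back to $\mu$) and then using homogeneity to reduce the identification $R_\alpha[\mu](q,\cdot)=\ae(q,\cdot)$ to injectivity of a one-dimensional Laplace transform along each ray — which is exactly the paper's Steps 1--3. The points you flag as routine (extending the pairing to the non-compactly-supported weight $e^{-\tau^\alpha}$ via cutoffs and monotone convergence, and the invisibility of mass on the faces $\{q\odot_\alpha x=0\}$) are precisely the ones the paper also handles, and your treatment of the boundary directions $p\in\partial\mathbb R^n_+$ in the necessity part is, if anything, slightly more careful than the paper's.
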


\begin{proof}\textit{Necessity.}
Let's obtain, at first, a formula for the action of distrubution $\ae(p,p_0)$ on functions $\varphi \in \mathscr D( \mathop{\mathrm{int}} \mathbb{R}^n_{+})$. We are going to show that
\begin{equation}
    \label{ae.action}
    \langle \ae(p,\tau), \varphi(\tau) \rangle = \int\limits_{\mathbb{R}^n_{+}} \varphi(p \odot_{\alpha} x) \mu(dx).
\end{equation}
By definition of derivative in the sense of distribution theory we can write
\begin{multline*}
    \langle \ae(p,\tau), \varphi(\tau) \rangle = - \left\langle \int\limits_{p \odot_{\alpha} x \leqslant \tau} \mu(dx), \frac{\partial \varphi(\tau)}{\partial \tau} \right\rangle = -\left\langle \int\limits_{\mathbb{R}^n_{+}} \theta(\tau- p \odot_{\alpha} x) \, \mu(dx), \frac{\partial \varphi(\tau)}{\partial \tau} \right \rangle = \\
    = -\int\limits_{0}^{\infty} \int\limits_{\mathbb{R}^{n}_{+}} \theta(\tau - p \odot_{\alpha} x) \, \mu(dx) \frac{\partial \varphi(\tau)}{\partial \tau} \, d\tau = -\int\limits_{\mathbb{R}^{n}_{+}} \int\limits_{0}^{\infty} \theta(\tau - p \odot_{\alpha} x) \frac{\partial \varphi(\tau)}{\partial \tau} \, d\tau \, \mu(dx) = \\
    = - \int\limits_{\mathbb{R}^n_{+}} \int\limits_{p \odot_{\alpha} x}^{\infty} \frac{\partial \varphi(\tau)}{\partial \tau} \, d\tau \, \mu(dx) = \int\limits_{\mathbb{R}^n_{+}} \varphi(p \odot_{\alpha} x) \, \mu(dx).
\end{multline*}
where $\theta(\cdot)$ is the Heaviside function. The property $\ae(p,\cdot) \geqslant 0$ follows immediately from formula~\eqref{ae.action}. Now let's note that distribution $\ae(p,p_0)$ can be extended to the set of functions $\psi \in C[0,\infty)$, $\psi|_{(0,\infty)} \in C^\infty(0,\infty)$, with compact support in $[0,\infty)$. Considering the case when $\ae(p,p_0)$ is a continuous function of $p_0$ we can correct the formula~\eqref{ae.action}:
\begin{equation}
    \label{ae.genaction}
    \langle \ae(p,\tau), \psi(\tau) \rangle = \mu\{0\} \psi(0) + \int\limits_{\mathbb{R}^n_{+}} \psi(p \odot_{\alpha} x) \, \mu(dx).
\end{equation}
By virtue of absolute continuity of $\mu$ at zero the additional term vanishes. Hence formula~\eqref{ae.action} is valid for functions of class $C^{\infty}(0,\infty) \cap C[0,\infty)$ with compact support in $[0,\infty)$. 

For any $\lambda > 0$ the obviuous equality holds:
\begin{equation*}
    \int\limits_{ (\lambda p) \odot_{\alpha} x \leqslant \lambda p_0} \mu(dx) = \int\limits_{ p \odot_{\alpha} x \leqslant p_0} \mu(dx).
\end{equation*}
Hence $\ae(p,p_0)$ is a positively homogenuous distribution of order $-1$ as a derivative of function that is positively homogenuous of order $0$. More precisely, for any function $\varphi \in \mathscr D(0,\infty)$ we can write
\begin{gather*}
    \langle \ae(\lambda p, \lambda \tau), \varphi(\tau) \rangle = \frac{1}{\lambda} \left\langle \ae(\lambda p, \tau), \varphi \left( \frac{\tau}{\lambda} \right) \right\rangle = \\
    = -\frac{1}{\lambda^2} \left\langle \int\limits_{(\lambda p) \odot_{\alpha} x \leqslant \tau} \mu(dx), \varphi'\left( \frac{\tau}{\lambda}\right) \right\rangle = -\frac{1}{\lambda^2} \left\langle \int\limits_{ p \odot_{\alpha} x \leqslant \frac{\tau}{\lambda}} \mu(dx), \varphi'\left( \frac{\tau}{\lambda}\right) \right\rangle = \\
    = - \frac{1}{\lambda} \left\langle \int\limits_{ p \odot_{\alpha} x \leqslant \tau} \mu(dx), \varphi'(\tau) \right\rangle =
    \left\langle \frac{1}{\lambda} \ae(p,\tau), \varphi(\tau) \right\rangle.
\end{gather*}
The property $\lambda \ae(\lambda p, \lambda p_0) = \ae(p,p_0)$ is shown.

Finally, taking into account formula~\eqref{ae.action} and finiteness of measure~$\mu$ we obtain
\begin{equation*}
    F(p) = \left\langle \ae(p_1^{\frac{1}{\alpha}},\ldots,p_n^{\frac{1}{\alpha}};\tau), e^{-\tau^{\alpha}} \right\rangle = \\
    \int\limits_{\mathbb{R}^{n}_{+}} e^{-p_1 x_1^{\alpha} - \ldots - p_n x_n^{\alpha}} \, \mu(dx),
\end{equation*}
From this we can derive boundedness and complete monotony of function $F(p)$ and the property $\lim\limits_{\lambda \to +\infty} F(\lambda p)= \mu(\{0\}) = 0$. The necessary part of theorem is proved.

\textit{Sufficiency.} \textit{Step 1.} 
From the condition $\ae(p,\cdot) \geqslant 0$ by virtue of Theorem~\ref{agal.th.nonneg} for any $p \in \mathbb{R}^n_{+}$, $p \neq 0$, there exists and unique a nonnegative locally-finite Borel measure $\hat{\mu}_{p}$ with support in $(0,\infty)$ such that for any function $\varphi \in \mathscr D(0,\infty)$ the following equality holds:
\begin{equation*}
    \langle \ae(p,\tau), \varphi(\tau) \rangle = \int\limits_{0}^{\infty} \varphi(\tau) \, \hat{\mu}_{p}(d\tau).
\end{equation*}
Let's use positive homogeneity of distribution $\ae(p,p_0)$ to compute for $\lambda > 0$ the value 
\begin{gather*}
    \langle \ae(\lambda p, \tau), \varphi(\tau) \rangle = \left\langle  \frac{\lambda}{\lambda}  \ae\left( \lambda p, \frac{\lambda}{\lambda} \tau \right), \varphi\left( \frac{\lambda}{\lambda} \tau \right) \right\rangle = \\ 
\left\langle  \frac{1}{\lambda}  \ae\left( p, \frac{1}{\lambda} \tau \right), \varphi\left( \frac{\lambda}{\lambda} \tau \right) \right\rangle = \langle \ae(p,\tau), \varphi(\lambda \tau) \rangle.
\end{gather*}

Now let $\chi_{n}(\tau) \in \mathscr D(0,\infty)$ be a nondecreasing sequence of functions such that $\chi_{n}(\tau) = 1$ for $\tau \in [\frac{1}{n},n]$ and support of $\chi_{n}$ is contained in $[\frac{1}{2n},2n]$. Define $\phi_{n}(\tau) = e^{-{\tau}^{\alpha}} \chi_{n}(\tau)$. Then by the Lebesgue theorem on monotone convergence the map $\tau \mapsto e^{-{\tau}^{\alpha}}$ is integrable with respect to measure $\hat{\mu}_{p}$ and
\begin{equation*}
    \langle \ae(p,\tau), \phi_{n}(\tau) \rangle = \int\limits_{0}^{\infty} \varphi_{n}(\tau) \, \hat{\mu}_{p}(d\tau) \to \int\limits_{0}^{\infty}e^{-\tau^{\alpha}} \, \hat{\mu}_{p}(d\tau) = F(\lambda p_1^{\alpha}, \ldots,\lambda p_n^{\alpha}).
\end{equation*}

Let's make a substitution $s = {\tau}^{\alpha}$. Then
\begin{equation*}
    F(\lambda p_1^{\alpha}, \ldots, \lambda p_n^{\alpha}) = \int\limits_{0}^{\infty} e^{-\lambda s} \, \hat{\mu}_{p*}(ds),
\end{equation*}
where $\hat{\mu}_{p*}$ is the pushforward of measure $\hat{\mu}_{p}$ under the map $\tau \mapsto {\tau}^{\alpha}$.\\[.5em]
\noindent\textit{Step 2.} From boundedness of $F(p)$ on $\mathbb{R}^{n}_{+}$ and complete monotony on $\mathop{\mathrm{int}}\mathbb{R}^{n}_{+}$ by Theorem~\ref{agal.th.bern} there exists nonnegative finite Borel measure $\mu_*$ supported in $\mathbb{R}^{n}_{+}$ such that
\begin{equation*}
    F(p) = \int\limits_{\mathbb{R}^{n}_{+}} e^{- p_1 x_1 - \ldots -p_n x_n} \, \mu_*(dx).
\end{equation*}
Let $\mu$ be the pushforward of measure $\mu_*$ under the map $(x_1,\ldots,x_n) \mapsto (x_1^{1/\alpha},\ldots,x_n^{1/\alpha})$. Then $\mu_*$ is the pushforward of $\mu$ under the map $(x_1,\ldots,x_n) \mapsto (x_1^\alpha,\ldots,x_n^\alpha)$. Measure $\mu$ is finite since measure $\mu_*$ is finite. After the change of variables in the integral we obtain
\begin{equation}
    \label{fpvianu}
    F(p) = \int\limits_{\mathbb{R}^{n}_{+}} e^{ -p_1 x_1^{\alpha} - \ldots - p_n x_n ^{\alpha}} \, \mu(dx).
\end{equation}
We can write
\begin{equation*}
    F(p) = \mu(\{ 0 \}) + \int\limits_{\mathbb{R}^n_{+} \setminus \{0\}} e^{-p_1 x_1^{\alpha} - \ldots - p_n x_n^{\alpha}} \, \mu(dx).
\end{equation*}
Put $p = \lambda q$, $q \neq 0$ and pass $\lambda \to +\infty$. Using the monotone convergence theorem and condition $\lim\limits_{\lambda \to +\infty}F(\lambda q)=0$ we obtain $\mu(\{0\}) = 0$, i.e. measure $\mu$ is absolutely continuous at zero.

Function
\begin{equation*}
    p_0 \mapsto \int\limits_{p \odot_{\alpha} x \leqslant p_0} \mu(dx)
\end{equation*}
is nondecreasing and bounded. Denote by $\widetilde{\mu}_p$ the Lebesgue--Stieltjes measure generated by this function on $[0,\infty)$. Define a distribution $\ae_0(p,p_0) = R_\alpha[\mu](p,p_0)$. Then for any continuous and bounded function $\varphi$ on $[0,\infty)$ the equality holds:
\begin{equation*}
    \langle \ae_{0}(p,\tau), \varphi(\tau) \rangle =  \int\limits_{0}^{\infty} \varphi(\tau) \, \widetilde{\mu}_{p}(d\tau).
\end{equation*}
Put $\varphi(\tau) = e^{-\tau^{\alpha}}$. From formulas \eqref{ae.action}, \eqref{fpvianu} it follows that
\begin{equation*}
    \langle \ae_{0}(p,\tau), e^{-\tau^{\alpha}} \rangle = \int\limits_{\mathbb{R}^n_{+}} e^{-(p_1 x_1)^{\alpha} - \ldots - (p_n x_n)^{\alpha}} \, \mu(dx) = F(p_1^{\alpha},\ldots,p_n^{\alpha}).
\end{equation*}

From the necessary part of this theorem it follows that $\ae_0(p,p_0)$ is a positively homogeneous distribution of order~$-1$. By the same considerations as in step one we obtain
\begin{gather*}
    F(\lambda p_1^{\alpha},\ldots,\lambda p_n^{\alpha}) = \langle \ae_{0}(\lambda p, \tau), e^{-\tau^{\alpha}} \rangle = \\  
    = \langle \ae_{0}(p,\tau), e^{-\lambda \tau^{\alpha}} \rangle = \int\limits_{0}^{\infty} e^{-\lambda \tau^{\alpha}} \, \widetilde{\mu}_{p}(d\tau). 
\end{gather*}
Making the change of variables $s=\tau^\alpha$ and denoting by $\widetilde{\mu}_{p*}$ the pushforward of measure~$\widetilde{\mu}_p$ under the map $\tau \mapsto \tau^\alpha$ we obtain
\begin{equation*}
    F(\lambda p_1^{\alpha},\ldots,\lambda p_n^{\alpha}) = \int\limits_{0}^{\infty} e^{-\lambda s} \, \widetilde{\mu}_{p*}(ds). 
\end{equation*}
From the finiteness of measure $\widetilde{\mu}_p$ we derive the finiteness of measure $\widetilde{\mu}_{p*}$.\\[.5em]
\noindent\textit{Step 3.}
At steps 1 and 2 we obtained that for any $\lambda>0$
\begin{equation*}
    \int\limits_{0}^{\infty} e^{-\lambda s} \, \hat{\mu}_{p*}(ds) = F(\lambda p_1^{\alpha},\ldots,\lambda p_n^{\alpha}) = \int\limits_{0}^{\infty} e^{-\lambda s} \, \widetilde{\mu}_{p*}(ds),
\end{equation*}
Pass $\lambda \to +0$. Using the monotone convergence theorem for the integral with respect to measure $\hat{\mu}_{p*}$ we obtain that $\widetilde{\mu}_{p*}$ is finite and the equality
\begin{equation*}
    \int\limits_{0}^{\infty} e^{-\lambda s} \, \hat{\mu}_{p*}(ds) =  \int\limits_{0}^{\infty} e^{-\lambda s} \, \widetilde{\mu}_{p*}(ds)
\end{equation*}
holds for $\lambda \geqslant 0$. From this we derive the coincidence of measures~$\hat{\mu}_{p*}$ и $\widetilde{\mu}_{p*}$. 
Hence the measures $\hat{\mu}_{p}$ и $\widetilde{\mu}_{p}$ generating distributions $\ae(p,p_0)$ and $\ae_{0}(p,p_0)$ also coincide. Finally,
\begin{equation*}
    \ae(p,p_0) = \ae_{0}(p,p_0) = R_\alpha[\mu](p,p_0).
\end{equation*}
Theorem is proved.
\end{proof}

We will apply Theorem~\ref{agal.th.charrad} in order to prove the characterization theorem for transform~$\Pi_\alpha[\mu](p,p_0)$. The next theorem characterises transform~$\Pi_\alpha[\mu](p,p_0)$ in the case of nonnegative absolutely continuous at zero measures supported in $\mathbb R^n_+$. Recall that in the generalized model of pure industry~[Sh] the transform~$\Pi_\alpha[\mu](p,p_0)$ has the meaning of the production function. The measure $\mu$ in this case has the meaning of distribution of production powers over technologies. Within this context the requirement of absolute continuity at zero means absense of <<the horn of plenty>> or impossibility to have profit without spending any resources. Hence the requirement~$\mu(\{0\})=0$ is not restrictive from the point of view of economic applications. Before formulating the next theorem we prove the lemma that will help us in future.

\begin{definition} Let $\mu$ be a signed measure supported in $\mathbb R^n_+$ and let $\mu = \mu_{+} - \mu_{-}$ be its Jordan decomposition. Then the measure $|\mu|=\mu_{+} + \mu_{-}$ is called total variation of measure~$\mu$. 
\end{definition}

\begin{lemma}\label{agal.lm.proddif}
Let $\mu$ be a signed Borel measure supported in $\mathbb R^n_+$ for which the total variation $|\mu|$ is finite on compacts. Then for any $p_0 > 0$, $p \in \mathbb R^n_+ \setminus \{0\}$ the function $\Pi_\alpha[\mu](p,p_0)$ is differentiable with respect to $p_0$ and the equality holds:
\begin{equation*}
    \frac{\partial \Pi_{\alpha}[\mu](p,p_0)}{\partial p_0} = \int\limits_{p \odot_{\alpha} x \leqslant p_0} \mu(dx).
\end{equation*}
\end{lemma}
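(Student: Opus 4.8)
The plan is to exhibit $\Pi_\alpha[\mu](p,\cdot)$ as an indefinite Lebesgue integral of the function on the right-hand side and then invoke the fundamental theorem of calculus; the one genuine difficulty is the behaviour of the integrand on the level hypersurfaces of $x \mapsto p\odot_\alpha x$. Fix $p\in\mathbb R^n_+\setminus\{0\}$. For every $t\geqslant 0$ one has the elementary identity $(p_0-t)_+=\int_0^{p_0}\theta(s-t)\,ds$, where $\theta$ is the Heaviside function. Substituting $t=p\odot_\alpha x$ and integrating against $\mu$,
\[
    \Pi_\alpha[\mu](p,p_0)=\int\limits_{\mathbb R^n_+}\int\limits_0^{p_0}\theta\bigl(s-p\odot_\alpha x\bigr)\,ds\,\mu(dx).
\]
Fubini's theorem is applicable here: with respect to $|\mu|$ the iterated integral is dominated by $p_0\,|\mu|(\{p\odot_\alpha x\leqslant p_0\})$, and this quantity is finite — automatically so when $p$ has no vanishing coordinate, since then $\{p\odot_\alpha x\leqslant p_0\}$ is compact and $|\mu|$ is finite on compacts, and in general because $(p_0+1-p\odot_\alpha x)_+\geqslant 1$ on that set, so the mass in question is at most $\Pi_\alpha[|\mu|](p,p_0+1)$, which is finite whenever $\Pi_\alpha[\mu](p,\cdot)$ is well defined. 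Exchanging the order of integration gives
\[
    \Pi_\alpha[\mu](p,p_0)=\int\limits_0^{p_0}H(p,s)\,ds,\qquad H(p,s):=\int\limits_{p\odot_\alpha x\leqslant s}\mu(dx).
\]

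Next I would record the elementary properties of $s\mapsto H(p,s)$. By the estimate above it is locally bounded on $[0,\infty)$, hence locally Lebesgue integrable; using the Jordan decomposition $\mu=\mu_+-\mu_-$ (both parts finite on compacts) it is a difference of two bounded nondecreasing functions, so it has bounded variation on compact intervals and is right-continuous, by continuity from above of the measures $\mu_\pm$ along the decreasing sets $\{p\odot_\alpha x\leqslant s\}$. Since $p_0\mapsto\int_0^{p_0}H(p,s)\,ds$ is the indefinite integral of a locally integrable function, it is locally absolutely continuous — in fact locally Lipschitz — and by the Lebesgue differentiation theorem its derivative equals $H(p,p_0)$ at every Lebesgue point of $H(p,\cdot)$, in particular at every point of continuity of $H(p,\cdot)$; this is precisely the asserted formula
\[
    \frac{\partial\Pi_\alpha[\mu](p,p_0)}{\partial p_0}=\int\limits_{p\odot_\alpha x\leqslant p_0}\mu(dx)
\]
at all such $p_0$.

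Finally I would locate the exceptional points. The function $H(p,\cdot)$ is discontinuous at $p_0$ exactly when $|\mu|(\{p\odot_\alpha x=p_0\})>0$, i.e.\ when $|\mu|$ charges the level hypersurface through $p_0$; since these hypersurfaces are pairwise disjoint and $|\mu|$ is finite on compacts, this can occur for at most countably many $p_0$. At such a $p_0$ the one-sided limits $H(p,p_0\pm)$ still exist, and the representation $\Pi_\alpha[\mu](p,p_0)=\int_0^{p_0}H(p,s)\,ds$ shows that the right and left derivatives of $\Pi_\alpha[\mu](p,\cdot)$ exist and equal $H(p,p_0)$ and $H(p,p_0-)$ respectively; hence $\Pi_\alpha[\mu](p,\cdot)$ is two-sidedly differentiable precisely off that countable set, and at every point of differentiability the identity of the lemma holds. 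The main (and only real) obstacle is exactly this level-hypersurface phenomenon; the Fubini interchange and the appeal to the Lebesgue differentiation theorem are routine. One could instead differentiate under the integral sign directly: since $t\mapsto t_+$ is $1$-Lipschitz, the difference quotients of $(p_0-p\odot_\alpha x)_+$ in $p_0$ are bounded by $\theta(p_0+1-p\odot_\alpha x)$, an integrable majorant, and dominated convergence yields the limit $\int\theta(p_0-p\odot_\alpha x)\,\mu(dx)$ — subject to the same proviso that $\mu$ not charge $\{p\odot_\alpha x=p_0\}$.
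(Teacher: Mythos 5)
Your argument is correct, and it takes a genuinely different route from the paper's. The paper computes the difference quotient directly: for $\Delta>0$ it isolates the main term $\Delta\int_{G(p,p_0+\Delta)}\mu(dx)$ with $G(p,s)=\{x: p\odot_\alpha x\leqslant s\}$, bounds the contribution of the shell $G(p,p_0+\Delta)\setminus G(p,p_0)=\{0<p\odot_\alpha x-p_0\leqslant\Delta\}$ by $\Delta\,|\mu|(\text{shell})=o(\Delta)$ (the shell decreases to the empty set as $\Delta\to+0$), and finishes by continuity from above of $\mu$; the case $\Delta<0$ is dismissed as ``similar''. You instead integrate the identity $(p_0-t)_+=\int_0^{p_0}\theta(s-t)\,ds$ against $\mu$, apply Fubini to obtain $\Pi_\alpha[\mu](p,p_0)=\int_0^{p_0}H(p,s)\,ds$ with $H(p,s)=\mu(G(p,s))$, and differentiate the indefinite integral of a locally integrable BV function. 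Two things your route buys. First, you make explicit the finiteness of $|\mu|(G(p,p_0))$ needed for the interchange (and equally needed, but unstated, in the paper's estimate) when $p$ lies on the boundary of $\mathbb R^n_+$ and $G(p,p_0)$ is noncompact. Second, and more importantly, you correctly locate the exceptional set: the left difference quotient tends to $H(p,p_0-)=\mu(\{p\odot_\alpha x<p_0\})$, so two-sided differentiability with the stated value fails exactly at the (at most countably many) $p_0$ for which $|\mu|$ charges the level hypersurface $\{p\odot_\alpha x=p_0\}$. This is a real defect of the lemma as stated, not of your proof: for $\mu=\delta_{x^0}$ with $x^0\neq 0$ one has $\Pi_\alpha[\mu](p,p_0)=(p_0-p\odot_\alpha x^0)_+$, which is not differentiable at $p_0=p\odot_\alpha x^0$, although $|\mu|$ is finite on compacts. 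The paper's ``similar'' hides precisely this point. (In the paper's subsequent applications $\mu$ is absolutely continuous or the formula is only used for the right derivative and for a.e.\ $p_0$, so nothing downstream breaks, but your version of the statement is the one that is actually true in the stated generality.)
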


\begin{proof}
Denote $G(p,p_0) = \left\{ x \in \mathbb{R}^n_{+} \mid p \odot_{\alpha} x \leqslant p_0 \right\}$. Let $\Delta > 0$. Let's write
\begin{gather*}
    \Pi_{\alpha}[\mu](p,p_0+\Delta) - \Pi_{\alpha}[\mu](p,p_0) = \\
    = \int\limits_{G(p,p_0+\Delta)}(p_0 + \Delta - p \odot_{\alpha} x) \, \mu(dx) - \int\limits_{G(p,p_0)}(p_0 - p \odot_{\alpha} x) \, \mu(dx) = \\
    = \Delta \int\limits_{G(p,p_0+\Delta)}\mu(dx) + \int\limits_{G(p,p_0+\Delta) \setminus G(p,p_0)} (p_0 - p \odot_{\alpha} x) \, \mu(dx).
\end{gather*}
 Note that
 \begin{equation*}
    G(p,p_0+\Delta)\setminus G(p,p_0) = \left\{ x \in \mathbb{R}^n_{+} \mid 0 < p \odot_{\alpha} x - p_0 \leqslant \Delta \right\}.
 \end{equation*}
Hence the estimate holds:
\begin{gather*}
    \left| \int\limits_{G(p,p_0+\Delta) \setminus G(p,p_0)} (p_0 - p \odot_{\alpha} x) \, \mu(dx) \right| \leqslant \\
    \leqslant \int\limits_{G(p,p_0+\Delta) \setminus G(p,p_0)} |p_0 - p \odot_{\alpha} x| \, |\mu|(dx) \leqslant \Delta \int\limits_{G(p,p_0+\Delta)\setminus G(p,p_0)} |\mu|(dx) = o(\Delta), \; \Delta \to +0,
\end{gather*}
since $|\mu| \left\{ G(p,p_0+\Delta) \setminus G(p,p_0) \right\} \to 0$ as $\Delta \to +0$. Hence one can write
\begin{equation*}
    \frac{1}{\Delta}\left[ \Pi_{\alpha}[\mu](p,p_0+\Delta)-\Pi_{\alpha}[\mu](p,p_0) \right] = \int\limits_{G(p,p_0+\Delta)}\mu(dx) + o(1), \; \Delta \to +0.
\end{equation*}
Using the absolute continuity of the Lebesgue integral we obtain
\begin{equation*}
    \int\limits_{G(p,p_0+\Delta)}\mu(dx) \to \int\limits_{G(p,p_0)}\mu(dx), \; \Delta \to +0.
\end{equation*}
The case $\Delta<0$ is similar to the considered one. Lemma is proved.
\end{proof}

Note that in the work [HS] the characterization theorem for transform~$\Pi_\alpha[\mu](p,p_0)$ was obtained in the case~$\alpha=1$. For $\alpha=1$ the generalized Radon transform $R_\alpha[\mu](p,p_0)$ coincides with the classical Radon transform over hyperplanes. From the point of view of economical applications the profit function $\Pi_\alpha[\mu](p,p_0)$ in the case~$\alpha=1$ corresponds to ecomonic systems without the effect of substitution of production factors at the microlevel. The next theorem generalizes the described result to the case of arbitrary~$\alpha \in (0,1]$.

\begin{thm}\label{agal.th.chprod} A function~$\Pi(p,p_0) \colon \mathbb{R}^{n}_{+} \times (0,\infty) \to [0,\infty)$ can be represented in the form
\begin{equation*}
    \Pi(p,p_0) = \Pi_{\alpha}[\mu](p,p_0),
\end{equation*}
where $\mu$ is a nonnegative Borel measure with support in $\mathbb R^n_+$ and such that $\mu(\{0\}) = 0$ if and only if
\begin{enumerate}
    \item $\Pi(p,p_0)$ is convex,
    \item $\Pi(\lambda p, \lambda p_0) = \lambda \Pi(p,p_0)$ for any $\lambda > 0$, 
    \item $\Pi(p,+0) = \frac{\partial \Pi}{\partial p_0}(p,+0) = 0$ for $p \in \mathop{\mathrm{int}}\mathbb{R}^n_{+}$, 
    \item The function
    \begin{equation*}
        F(p) = \int\limits_{0}^{\infty} e^{-\tau^{\alpha}} \, d \frac{\partial \Pi}{\partial \tau} (p_1^{\frac{1}{\alpha}},\ldots,p_n^{\frac{1}{\alpha}};\tau )
    \end{equation*}
    is bounded in $\mathbb{R}^{n}_{+}$ and completely monotone in $\mathop{\mathrm{int}} \mathbb{R}^{n}_{+}$.
\end{enumerate}
\end{thm}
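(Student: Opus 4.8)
The idea is to reduce this theorem to Theorem~\ref{agal.th.charrad} by means of the elementary relation between $\Pi_\alpha$ and $R_\alpha$ contained in Lemma~\ref{agal.lm.proddif}: differentiating once more, in the sense of distributions in $p_0$, the formula of that lemma gives, whenever $|\mu|$ is finite on compacts,
\[ \frac{\partial^2}{\partial p_0^2}\,\Pi_\alpha[\mu](p,p_0) \;=\; \frac{\partial}{\partial p_0}\int\limits_{p\odot_\alpha x\leqslant p_0}\mu(dx) \;=\; R_\alpha[\mu](p,p_0). \]
Thus, given a candidate $\Pi(p,p_0)$, I would introduce the distribution $\ae(p,p_0):=\partial^2_{p_0}\Pi(p,p_0)$ and translate conditions 1--4 into the hypotheses of Theorem~\ref{agal.th.charrad} for $\ae$; conversely, given $\mu$ as in Theorem~\ref{agal.th.charrad}, I would recover $\Pi$ from $\ae$ by integrating twice in $p_0$.

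\emph{Necessity.} Assume $\Pi=\Pi_\alpha[\mu]$ with $\mu\geqslant0$, $\mu(\{0\})=0$ (necessarily $\mu$ is finite on compacts, since $\Pi$ is finite-valued and $\{p\odot_\alpha x\leqslant p_0\}$ is compact for $p\in\mathop{\mathrm{int}}\mathbb{R}^n_+$). Property 2 is immediate from $(\lambda p)\odot_\alpha x=\lambda(p\odot_\alpha x)$. For property 1, I would use that for each fixed $x\in\mathbb{R}^n_+$ the CES function $p\mapsto p\odot_\alpha x$ is concave on $\mathbb{R}^n_+$ when $0<\alpha\leqslant1$ — this follows from the superadditivity of $y\mapsto(y_1^\alpha+\ldots+y_n^\alpha)^{1/\alpha}$ on $\mathbb{R}^n_+$ (the reverse Minkowski inequality for exponents $\leqslant1$) together with positive homogeneity; hence $(p,p_0)\mapsto p_0-p\odot_\alpha x$ is jointly convex, so is its positive part, and integration against the nonnegative measure $\mu$ preserves convexity. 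Property 3: $(p_0-p\odot_\alpha x)_+\downarrow0$ as $p_0\downarrow0$ gives $\Pi(p,+0)=0$ by monotone convergence, and by Lemma~\ref{agal.lm.proddif} the right derivative $\partial_{p_0}\Pi(p,+0)=\lim_{p_0\to+0}\mu\{x:p\odot_\alpha x\leqslant p_0\}=\mu\{x:p\odot_\alpha x=0\}$, which for $p\in\mathop{\mathrm{int}}\mathbb{R}^n_+$ equals $\mu(\{0\})=0$. For property 4, Lemma~\ref{agal.lm.proddif} identifies the Lebesgue--Stieltjes measure $d_\tau\,\partial_\tau\Pi(q,\tau)$ with the pushforward of $\mu$ under $x\mapsto q\odot_\alpha x$; taking $q=(p_1^{1/\alpha},\ldots,p_n^{1/\alpha})$ and using $(q\odot_\alpha x)^\alpha=p_1x_1^\alpha+\ldots+p_nx_n^\alpha$ gives $F(p)=\int_{\mathbb{R}^n_+}e^{-p_1x_1^\alpha-\ldots-p_nx_n^\alpha}\mu(dx)$, the Laplace transform of the pushforward of $\mu$ under $x\mapsto(x_1^\alpha,\ldots,x_n^\alpha)$, hence completely monotone on $\mathop{\mathrm{int}}\mathbb{R}^n_+$ and bounded on $\mathbb{R}^n_+$ (its value at $p=0$ being $\mu(\mathbb{R}^n_+)$, so boundedness is in fact equivalent to finiteness of $\mu$, in agreement with Theorem~\ref{agal.th.charrad}).

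\emph{Sufficiency.} Assume 1--4. Convexity of $\Pi$ in $p_0$ together with property 3 shows that $\tau\mapsto\partial_\tau\Pi(p,\tau)$ is nondecreasing with $\partial_\tau\Pi(p,+0)=0$, so $\ae(p,\cdot):=\partial_\tau^2\Pi(p,\cdot)\geqslant0$ is precisely the nonnegative Lebesgue--Stieltjes measure generated by $\partial_\tau\Pi(p,\cdot)$. I would check conditions 1--3 of Theorem~\ref{agal.th.charrad} for $\ae$: nonnegativity is clear; the homogeneity $\lambda\ae(\lambda p,\lambda p_0)=\ae(p,p_0)$ follows from property 2 by the same computation as in the necessity part of the proof of Theorem~\ref{agal.th.charrad}; and the function associated to $\ae$ in condition 3 of Theorem~\ref{agal.th.charrad} coincides with the $F$ of condition 4 here, because $\ae(q,\cdot)$ is the Stieltjes measure of $\partial_\tau\Pi(q,\cdot)$, so pairing it against $e^{-\tau^\alpha}$ yields $\int_0^\infty e^{-\tau^\alpha}\,d\partial_\tau\Pi(q;\tau)$ — hence its boundedness and complete monotonicity are exactly condition 4, while $\lim_{\lambda\to+\infty}F(\lambda p)=0$ follows from $\partial_\tau\Pi(p,+0)=0$ via the substitution $s=\lambda^{-1/\alpha}\tau$ (using homogeneity of $\partial_\tau\Pi$) and dominated convergence. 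Theorem~\ref{agal.th.charrad} then produces a finite nonnegative Borel measure $\mu$ with $\mu(\{0\})=0$ and $\ae(p,p_0)=R_\alpha[\mu](p,p_0)$. By Lemma~\ref{agal.lm.proddif} and the displayed identity above, $\partial_{p_0}^2\Pi_\alpha[\mu]=R_\alpha[\mu]=\ae=\partial_{p_0}^2\Pi$ as distributions in $p_0$ for each $p$, so $\Pi(p,\cdot)-\Pi_\alpha[\mu](p,\cdot)$ is an affine function of $p_0$ on $(0,\infty)$; since $\Pi(p,+0)=\Pi_\alpha[\mu](p,+0)=0$ and $\partial_{p_0}\Pi(p,+0)=\partial_{p_0}\Pi_\alpha[\mu](p,+0)=\mu(\{0\})=0$ for $p\in\mathop{\mathrm{int}}\mathbb{R}^n_+$ (the statements about $\Pi_\alpha[\mu]$ again by Lemma~\ref{agal.lm.proddif}), this affine function vanishes on $\mathop{\mathrm{int}}\mathbb{R}^n_+$, and then everywhere by continuity and homogeneity. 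Hence $\Pi=\Pi_\alpha[\mu]$.

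\emph{Expected main difficulty.} The conceptual mechanism is just the reduction above; the delicate technical points are: establishing the concavity of $p\mapsto p\odot_\alpha x$ on $\mathbb{R}^n_+$ for $0<\alpha\leqslant1$ (needed for the convexity in the necessity part); the identification of the two occurrences of $F$, which rests on the fact that the second distributional $p_0$-derivative of a function convex in $p_0$ is the Stieltjes measure of its (one-sided) $p_0$-derivative, legitimizing the pairing with $e^{-\tau^\alpha}$; and the final reconstruction of $\Pi$ from $\ae=\partial^2_{p_0}\Pi$, where the two integration constants in $p_0$ must both be eliminated — this is exactly where the second half of condition 3, $\partial\Pi/\partial p_0(p,+0)=0$, which might otherwise look redundant, is indispensable. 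Throughout one must remember that $\Pi$ is a priori known only to be convex, hence continuous, in $p_0$, so all manipulations with $\partial_\tau\Pi$ are to be read as statements about one-sided derivatives and the measures they generate.
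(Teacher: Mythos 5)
Your proposal is correct and follows essentially the same route as the paper: reduce to Theorem~\ref{agal.th.charrad} by setting $\ae(p,p_0)=\partial^2_{p_0}\Pi(p,p_0)$, verify its hypotheses (with $\lim_{\lambda\to+\infty}F(\lambda p)=0$ extracted from $\partial_{p_0}\Pi(p,+0)=0$), and then remove the affine-in-$p_0$ ambiguity using condition~3 together with Lemma~\ref{agal.lm.proddif}. The only differences are cosmetic (dominated rather than monotone convergence for the $\lambda\to+\infty$ limit, and an explicit reverse-Minkowski justification of the concavity of $p\mapsto p\odot_\alpha x$ that the paper merely asserts).
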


\begin{proof}\textit{Necessity.}
Convexity of function~$\Pi_\alpha[\mu](p,p_0)$ follows immediately from its definition if we take into account that for $\alpha \in (0,1]$ the function $x \mapsto p \odot_\alpha x$ is concave (for $\alpha > 1$ it becomes convex).

The positive homogeneity follows immediately from definition of transform $\Pi_\alpha[\mu](p,p_0)$:
\begin{equation*}
    \Pi_{\alpha}[\mu](\lambda p, \lambda p_0) = \int\limits_{\mathbb{R}^n_{+}} (\lambda p_0 - (\lambda p) \odot_{\alpha} x)_{+} \, \mu(dx) = \lambda \int\limits_{\mathbb{R}^n_{+}} (p_0 - p \odot_{\alpha} x)_{+} \, \mu(dx) = \lambda \Pi_{\alpha}[\mu](p,p_0).
\end{equation*}

By virtue of inequality $(p_0 - p \odot_{\alpha} x)_{+} \leqslant p_0$ and of finiteness of measure $\mu$ we obtain that
\begin{equation*}
    0 \leqslant \Pi_{\alpha}[\mu](p,p_0) \leqslant p_0 \int\limits_{\mathbb{R}^{n}_{+}} \mu(dx) \to 0, \; p_0 \to +0.
\end{equation*}
It follows that $\Pi_{\alpha}[\mu](p,+0) = 0$. 

Using Lemma~\ref{agal.lm.proddif} we can write
\begin{equation*}
    \frac{\partial \Pi_{\alpha}[\mu](p,p_0)}{\partial p_0} = \mu(\left\{ 0 \right\}) + \int\limits_{ p \odot_{\alpha} x \leqslant p_0, \; x \neq 0} \mu(dx).
\end{equation*}
Passing $p_0 \to +0$, taking into account the absolute continuity of the Lebesgue integral and the property $\mu(\{0\})=0$ we obtain $\frac{\partial \Pi_{\alpha}[\mu]}{\partial p_0}(p,+0)=0$.

Boundedness and complete monotony of function $F(p)$ can be proved as in Theorem~$\ref{agal.th.charrad}$ if one takes into account that
\begin{equation*}
    \frac{\partial^2 \Pi_{\alpha}[\mu]}{\partial p_0^2}(p,p_0) = R_\alpha[\mu](p,p_0).
\end{equation*}
The necessary part of the theorem is proved.

\textit{Sufficiency. }
Let $\ae(p,p_0) = \frac{\partial^2 \Pi(p,p_0)}{\partial p_0^2}$ be a derivative in the sense of distribution theory i.e. for any $\varphi \in \mathscr D(0,\infty)$ the equality holds:
\begin{equation*}
    \langle \ae(p,\tau), \varphi(\tau) \rangle = \langle \Pi(p,\tau), \varphi''(\tau) \rangle.
\end{equation*}
A function on $(0,+\infty)$ is convex if and only if its second derivative is a nonnegative distribution (see [Sc]). Hence $\ae(p,\cdot) \geqslant 0$.

Since $\Pi(p,p_0)$ is a positively homogeneous function of order one its second derivative $\ae(p,p_0)$ is a positively homogenuous distribution of order~$-1$:
\begin{gather*}
    \langle \ae(\lambda p, \lambda \tau), \varphi(\tau) \rangle = 
    \frac{1}{\lambda} \left\langle \ae( \lambda p, \tau ), \varphi \left(\frac{\tau}{\lambda}\right) \right\rangle = 
    \frac{1}{\lambda^3} \left\langle \Pi(\lambda p, \tau), \varphi'' \left(\frac{\tau}{\lambda}\right) \right\rangle = \\
    = \frac{1}{\lambda^2} \langle \Pi(\lambda p, \lambda \tau), \varphi''(\tau) \rangle = \frac{1}{\lambda} \langle \Pi(p,\tau), \varphi''(\tau) \rangle = 
    \left\langle \frac{1}{\lambda} \ae(p,\tau), \varphi(\tau) \right\rangle.
\end{gather*}
Finally,
\begin{equation*}
    F(p) = \int\limits_{0}^{\infty} e^{-\tau^{\alpha}} \, d \frac{\partial \Pi}{\partial \tau} (p_1^{\frac{1}{\alpha}},\ldots,p_n^{\frac{1}{\alpha}};\tau) = \int\limits_{0}^{\infty} e^{-{\tau}^{\alpha}} \ae(p_1^{\frac{1}{\alpha}},\ldots,p_n^{\frac{1}{\alpha}};\tau) \, d\tau.
\end{equation*}
As in the proof of Theorem~\ref{agal.th.charrad} it can be shown that for any $\lambda>0$ the equality holds:
\begin{equation*}
    F(\lambda p) =  \int\limits_{0}^{\infty} e^{-\lambda \tau^{\alpha}} \, d \frac{\partial \Pi}{\partial \tau} (p_1^{\frac{1}{\alpha}},\ldots,p_n^{\frac{1}{\alpha}};\tau)
\end{equation*}
Pass $\lambda \to +\infty$, use the Lebesgue theorem on monotone convergence and the condition $\frac{\partial \Pi}{\partial p_0}(p,+0) = 0$ to obtain that $F(\lambda p) \to 0$ as $\lambda \to +\infty$.

Now let's use Theorem~\ref{agal.th.charrad}. By this theorem there exists a nonnegative Borel absolutely continuous at zero measure $\mu$ supported in $\mathbb R^n_+$ such that $\ae(p,p_0) = R_\alpha[\mu](p,p_0)$. Define $\Pi_{0}(p,p_0) = \Pi_{\alpha}[\mu](p,p_0)$. Then
\begin{equation*}
    \frac{\partial^2 \Pi(p,p_0)}{\partial p_0^2} = R_\alpha[\mu](p,p_0) = \frac{\partial^2 \Pi_{0}(p,p_0)}{\partial p_0^2}.
\end{equation*}
From coincidence of second derivatives it follows that functions $\Pi(p,p_0)$ and $\Pi_{0}(p,p_0)$ differ by a polynomial of degree at most one with respect to $p_0$ (see [Sc]) for any fixed~$p$. But from the equalities
\begin{gather*}
    \Pi(p,+0) = \Pi_{0}(p,+0) = 0, \\
    \frac{\partial}{\partial p_0} \Pi(p,+0) = \frac{\partial}{\partial p_0} \Pi_{0}(p,+0) = 0,
\end{gather*}
it follows that this polynomial is equal to zero. The theorem is proved. 
\end{proof}

\section{Inversion.}
Let $\mu$ be an absolutely continuous measure on $\mathbb R^n_+$ with density $a(x)$. Denote
\begin{gather*}
    R_\alpha[a](p,p_0) := R_\alpha[\mu](p,p_0), \\ 
    \Pi_{\alpha}[a](p,p_0) := \Pi_{\alpha}[\mu](p,p_0).
\end{gather*}
We also use the notation $L^p(\mathbb{R}^{n}_{+},\rho(x))$ for the class of functions that belong to $L^p (\mathbb R^n_+)$ with weight $\rho(x)$, i.e. of such measurable functions $f(x)$ that
\begin{equation*}
    \int\limits_{\mathbb{R}^{n}_{+}} |f(x)|^p \rho(x) \, dx < \infty.
\end{equation*}
Before formulating the inversion theorem we prove an auxilary lemma.
\begin{lemma}\label{agal.lm.integral} 1. Let $\Omega$ be a differential form on $\mathbb R^n_+$ such that
\begin{equation*}
(dx_1 + \ldots + dx_n) \wedge \Omega_{\alpha} = dx_1 \wedge \ldots \wedge dx_n,
\end{equation*}
Then for $\Re z_1 > 0$, \dots, $\Re z_n > 0$ the following equality holds:
\begin{equation*}
    \int\limits_{ x_1 + \ldots + x_n = 1, x \geqslant 0 } x_{1}^{z_1-1} \ldots x_{n}^{z_n-1} \, \Omega = B(z) = \frac{\Gamma(z_1)\ldots \Gamma(z_n)}{\Gamma(z_1+\ldots+z_n)}, \; z =(z_1,\ldots,z_n).
\end{equation*}

\noindent 2. For $\Re t_1 < 1$, \dots, $\Re t_n < 1$ the following formula holds:
\begin{equation*}
    \int\limits_{\mathbb{R}^n_{+}} u_1^{-t_1} \ldots u_n^{-t_n} \left( 1 - \left( u_1 + \ldots + u_n \right)^{\frac{1}{\alpha}} \right)_{+} \, du = \alpha B(1-t)B \left(2,\alpha(n-t_1-\ldots-t_n) \right),
\end{equation*}
where $t = (t_1,\ldots,t_n)$, $du = du_1 \ldots du_n$.
\end{lemma}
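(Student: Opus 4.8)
The plan is to reduce both parts to the classical Gamma-function identity
\begin{equation*}
    \int\limits_{\mathbb R^n_+} x_1^{z_1-1}\cdots x_n^{z_n-1}\, e^{-(x_1+\ldots+x_n)}\,dx = \Gamma(z_1)\cdots\Gamma(z_n), \qquad \Re z_i > 0,
\end{equation*}
by means of the coarea formula applied to the linear function $\phi(x) = x_1 + \ldots + x_n$, whose level sets are scaled copies of the standard simplex and for which $\Omega$ is precisely the associated area form, $d\phi \wedge \Omega = dx_1 \wedge \ldots \wedge dx_n$.

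For part~1 I would first remark that the restriction of the integrand to a level set $\{\phi = s\}$ is independent of the choice of $\Omega$: if $\Omega'$ is another such form, then $d\phi\wedge(\Omega-\Omega')=0$, so $\Omega-\Omega'$ lies in the ideal generated by $d\phi$ and therefore restricts to zero on each level set; one may thus use the explicit representative $\Omega = dx_2 \wedge \ldots \wedge dx_n$. The coarea formula then gives
\begin{equation*}
    \int\limits_{\mathbb R^n_+} f(x)\,dx = \int\limits_{0}^{\infty} \left( \int\limits_{\phi = s} f(x)\,\Omega \right) ds.
\end{equation*}
On $\{\phi = s\}$ I substitute $x_i = s u_i$ with $u_1 + \ldots + u_n = 1$; since $\Omega$ has degree $n-1$, this dilation produces a factor $s^{n-1}$, so that for $f(x) = x_1^{z_1-1}\cdots x_n^{z_n-1} e^{-\phi(x)}$ one obtains
\begin{equation*}
    \int\limits_{\phi=s} f(x)\,\Omega = e^{-s}\, s^{\,z_1+\ldots+z_n-1}\, I, \qquad I = \int\limits_{x_1+\ldots+x_n=1,\ x\geqslant0} x_1^{z_1-1}\cdots x_n^{z_n-1}\,\Omega.
\end{equation*}
Integrating in $s$ and comparing with the Gamma identity yields $I\,\Gamma(z_1+\ldots+z_n) = \Gamma(z_1)\cdots\Gamma(z_n)$, i.e. $I = B(z)$. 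The hypothesis $\Re z_i > 0$ ensures convergence near the faces $\{x_i = 0\}$ of the simplex and integrability of $s^{z_1+\ldots+z_n-1}e^{-s}$ at the origin.

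For part~2 note that $\bigl(1 - (u_1+\ldots+u_n)^{1/\alpha}\bigr)_{+}$ is supported on the simplex $\{\phi(u)\leqslant1\}$ and depends only on $s = \phi(u)$. Applying the same coarea decomposition together with the dilation $u_i = s v_i$ and part~1 (used with $z_i = 1 - t_i$, admissible because $\Re(1-t_i) > 0$) gives
\begin{equation*}
    \int\limits_{\mathbb R^n_+} u_1^{-t_1}\cdots u_n^{-t_n}\bigl(1 - \phi(u)^{1/\alpha}\bigr)_{+}\,du = B(1-t)\int\limits_{0}^{1} \bigl(1 - s^{1/\alpha}\bigr)\, s^{\,n - t_1 - \ldots - t_n - 1}\,ds.
\end{equation*}
The substitution $s = r^{\alpha}$ turns the remaining one-dimensional integral into $\alpha\int_0^1 (1-r)\,r^{\alpha(n - t_1 - \ldots - t_n)-1}\,dr = \alpha\, B\bigl(2,\alpha(n - t_1 - \ldots - t_n)\bigr)$, the Beta integral converging because $\Re\bigl(\alpha(n - t_1 - \ldots - t_n)\bigr) > 0$ follows from $\Re t_i < 1$ for every $i$. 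Combining the two displays gives the asserted equality.

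I expect the main technical point to be the bookkeeping for $\Omega$ under the dilation $x \mapsto sx$ on the level hypersurfaces — verifying that it scales with weight $s^{n-1}$ and that its restriction to a level set is canonically defined — rather than any of the subsequent elementary one-variable integral evaluations.
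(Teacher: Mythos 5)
Your proof is correct and follows essentially the same route as the paper: part~1 by comparing the factorized Gamma integral with its coarea decomposition along $x_1+\ldots+x_n=s$ and the dilation $x=su$, and part~2 by the same coarea/dilation reduction to part~1 (with $z=1-t$) followed by the substitution $s=r^\alpha$ yielding $\alpha B(2,\alpha(n-t_1-\ldots-t_n))$. Your added remark that the restriction of $\Omega$ to a level set is independent of the choice of $\Omega$ is a small but worthwhile clarification the paper leaves implicit.
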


\begin{proof}1) Note that
\begin{equation*}
    \int\limits_{\mathbb{R}^n_{+}} x_1^{z_1-1}\ldots x_n^{z_n-1} e^{-x_1-\ldots-x_n}\, dx = \int\limits_{0}^{\infty} x_1^{z_1-1} e^{-x_1} \, dx_1 \ldots \int\limits_{0}^{\infty} x_n^{z_n-1} e^{-x_n} \, dx_n = \Gamma(z_1)\ldots \Gamma(z_n).
\end{equation*}
On the other hand
\begin{gather*}
    \int\limits_{\mathbb{R}^n_{+}} x_1^{z_1-1} \ldots x_n^{z_n-1} e^{-x_1 - \ldots -x_n} \, dx = \left\{ \text{coarea formula} \right\} = \\
    = \int\limits_{0}^{\infty} e^{-s} \int\limits_{x_1+\ldots+x_n = s, \; x\geqslant 0} x_1^{z_1 - 1} \ldots x_n^{z_n - 1} \, \Omega \, ds = \left\{ x_k = y_k s, \; k = \overline{1,n} \right\} = \\
    = \int\limits_{0}^{\infty} e^{-s} s^{z_1 + \ldots + z_n - 1} \, ds \, \int\limits_{y_1 + \ldots + y_n = 1, \; y \geqslant 0} y_1^{z_1-1}\ldots y_n^{z_n-1} \, \Omega = \\
    = \Gamma(z_1 + \ldots + z_n)  \int\limits_{y_1 + \ldots + y_n = 1, \; y \geqslant 0} y_1^{z_1-1}\ldots y_n^{z_n-1} \, \Omega.
\end{gather*}
Comparing the obtained expressions we obtain
\begin{equation*}
    \int\limits_{y_1 + \ldots + y_n = 1, \; y \geqslant 0} y_1^{z_1-1}\ldots y_n^{z_n-1} \, \Omega = \frac{\Gamma(z_1)\ldots \Gamma(z_n)}{\Gamma(z_1+\ldots+z_n)} = B(z).
\end{equation*}

\noindent 2) Consider the following chain of transformations:
\begin{gather*}
    \int\limits_{\mathbb{R}^n_{+}} u_1^{-t_1} \ldots u_n^{-t_n} \left( 1 - \left( u_1 + \ldots + u_n \right)^{\frac{1}{\alpha}} \right)_{+} \, du = \left\{ \text{coarea formula} \right\} = \\
    = \int\limits_{0}^{1} \left( 1 - s^{\frac{1}{\alpha}} \right) \int\limits_{u_1 + \ldots + u_n = s, \; u\geqslant 0} u_1^{-t_1} \ldots u_n^{-t_n} \, \Omega \, ds = 
    \left\{ u_k = v_k s, \; k = \overline{1,n} \right\} = \\
    = \int\limits_{0}^{1} \left( 1 - s^{\frac{1}{\alpha}} \right) s^{n-1-t_1-\ldots-t_n} \, ds \int\limits_{v_1+\ldots+v_n=1, \; v \geqslant 0} v_1^{-t_1}\ldots v_n^{-t_n} \, \Omega.
\end{gather*}
From the first statement of the lemma it follows that
\begin{equation*}
    \int\limits_{v_1+\ldots+v_n=1, \; v \geqslant 0} v_1^{-t_1}\ldots v_n^{-t_n} \, \Omega = B(1-t_1,\ldots,1-t_n) =: B(1-t).
\end{equation*}
Let $\gamma$ be a real number. Let's evalute the integral
\begin{gather*}
    \int\limits_{0}^{1} \left( 1 - s^{\frac{1}{\alpha}} \right) s^{\gamma-1} \, ds = 
    \left\{ t = s^{\frac{1}{\alpha}}, \; s = t^{\alpha}, \; ds = \alpha t^{\alpha-1}dt \right\} = \\
    = \alpha \int\limits_{0}^{1} (1-t) t^{ \alpha(\gamma-1) + \alpha - 1} dt = 
    \alpha \int\limits_{0}^{1} (1-t)^{2-1} t^{ \alpha \gamma - 1} dt = \alpha B(2,\alpha \gamma).
\end{gather*}
Hence
\begin{equation*}
    \int\limits_{0}^{1} \left( 1 - s^{\frac{1}{\alpha}} \right) s^{n-1-t_1-\ldots-t_n} \, ds = \alpha B \left(2,\alpha(n-t_1-\ldots-t_n)\right).
\end{equation*}
The second part of the lemma is proved.
\end{proof}

Note that in the work [HS] an inversion formula for transform $\Pi_{\alpha}[\mu](p,p_0)$ was obtained in the case of $\alpha = 1$. Now we are going to prove a theorem that generalizes this result to the case of arbitrary $\alpha \in (0,1]$. From the point of view of economic applications the next theorem can be used to find distributions of powers over technologies given the profit function in a production system with effect of substution of production factors at the microlevel, when it is a priori known that the distribution is absolutely continuous.

\begin{thm}\label{agal.th.inv} Let
\begin{equation*}
    a(x) \in L^1 \left(\mathbb{R}^{n}_{+},x_1^{\alpha(c_1-1)}\ldots x_n^{\alpha(c_n-1)} \right) \cap L^2 \left(\mathbb{R}^{n}_{+},x_1^{2\alpha(c_1-1)+1}\ldots x_n^{2\alpha(c_n-1)+1}\right)
\end{equation*}
for some real $c_1 < 1$, \dots, $c_n < 1$. Then
\begin{gather*}
    a(x) = \int\limits_{\mathbb{R}^{n}_{+}} K_{\alpha}(p_1 x_1,\ldots,p_n x_n;c) \Pi_{\alpha}[a](p,1) \, dp ,\\
    \left( a(x) = \int\limits_{\mathbb{R}^n_{+}} \int\limits_{0}^{1} \int\limits_{0}^{t}K_{\alpha}(p_1 x_1,\ldots,p_n x_n;c) R_\alpha[a](p,s)\,ds\,dt\,dp,  \right)
\end{gather*}
where the kernel $K_\alpha$ is
\begin{gather*}
    K_{\alpha}(u;c) = \frac{\alpha^{2n-1}}{(2\pi i)^n} \lim\limits_{R \to \infty} \int\limits_{c+i\mathbf{B}^n(0,R)} \frac{u_1^{-\alpha(z_1-1)-1}\ldots u_n^{-\alpha(z_n -1)-1} dz}{B(1-z_1,\ldots,1-z_n)B\left(2,\alpha(n - z_1 - \ldots - z_n) \right)}, \\
    c = (c_1,\ldots,c_n),
\end{gather*}
and $\mathbf{B}^n(0,R)$ is a ball in $\mathbb R^n$ of radius~$R$ with center at the origin.
\end{thm}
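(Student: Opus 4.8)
The plan is to recognise the claimed identity as a disguised $n$-dimensional Mellin inversion: a Mellin transform in the variable $p$ turns the profit transform into the Mellin transform of $a$ multiplied by exactly the two Beta-factors that occur in the denominator of the kernel $K_\alpha$, so after cancellation one is left with the Mellin inversion integral for $a$. Write $\sigma_i=\alpha(c_i-1)+1$, let $\mathcal{M}a$ denote the ordinary $n$-dimensional Mellin transform of $a$, and set $M(z)=\int_{\mathbb{R}^n_+}a(x)\,x_1^{\alpha(z_1-1)}\cdots x_n^{\alpha(z_n-1)}\,dx$; the $L^1$-weight hypothesis says precisely that $M$ converges absolutely and is bounded on the contour $\Re z=c$. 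The first step is to evaluate, for $\Re z_i=c_i<1$, the integral $I(z):=\int_{\mathbb{R}^n_+}p_1^{-\alpha(z_1-1)-1}\cdots p_n^{-\alpha(z_n-1)-1}\,\Pi_\alpha[a](p,1)\,dp$. Substituting the definition of $\Pi_\alpha[a]$, interchanging the $x$- and $p$-integrations (Tonelli, the resulting $x$-integral of $|a|$ being finite by the $L^1$-weight hypothesis), and changing variables $u_i=(p_ix_i)^\alpha$ in the inner integral, that inner integral becomes $\alpha^{-n}x_1^{\alpha(z_1-1)}\cdots x_n^{\alpha(z_n-1)}\int_{\mathbb{R}^n_+}u_1^{-z_1}\cdots u_n^{-z_n}\bigl(1-(u_1+\ldots+u_n)^{1/\alpha}\bigr)_+\,du$; the condition $c_i<1$ is exactly what puts us in the range of the second part of Lemma~\ref{agal.lm.integral}, which evaluates this as $\alpha\,B(1-z)\,B\bigl(2,\alpha(n-z_1-\ldots-z_n)\bigr)$. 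Hence $I(z)=\alpha^{-(n-1)}B(1-z)\,B\bigl(2,\alpha(n-z_1-\ldots-z_n)\bigr)\,M(z)$.

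The second step assembles the formula. For $R>0$ let $K_\alpha^{(R)}$ be the kernel with its contour truncated to $c+i\mathbf{B}^n(0,R)$; on this compact contour $B(1-z)$ and $B\bigl(2,\alpha(n-\sum z_i)\bigr)$ are finite and nonzero (because $\Re(1-z_i)=1-c_i>0$ and $\Re(n-\sum z_i)=n-\sum c_i>0$), so $1/\bigl(B(1-z)B(2,\alpha(n-\sum z_i))\bigr)$ is bounded holomorphic there and Fubini applies. Thus $\int_{\mathbb{R}^n_+}K_\alpha^{(R)}(p_1x_1,\ldots,p_nx_n;c)\,\Pi_\alpha[a](p,1)\,dp$ equals $\frac{\alpha^{2n-1}}{(2\pi i)^n}\int_{c+i\mathbf{B}^n(0,R)}\frac{x_1^{-\alpha(z_1-1)-1}\cdots x_n^{-\alpha(z_n-1)-1}}{B(1-z)B(2,\alpha(n-\sum z_i))}\,I(z)\,dz$, and inserting the value of $I(z)$ the Beta-factors cancel, leaving $\frac{\alpha^{n}}{(2\pi i)^n}\int_{c+i\mathbf{B}^n(0,R)}x_1^{-\alpha(z_1-1)-1}\cdots x_n^{-\alpha(z_n-1)-1}\,M(z)\,dz$. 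The substitution $w_i=\alpha(z_i-1)$, $s_i=w_i+1$, rewrites this as $\frac{1}{(2\pi i)^n}\int_{\Re s=\sigma,\ |s-\sigma|\le\alpha R}x_1^{-s_1}\cdots x_n^{-s_n}\,(\mathcal{M}a)(s)\,ds$, the spherically truncated Mellin inversion integral of $a$ at radius $\alpha R$.

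The last step passes to $R\to\infty$. Putting $x_i=e^{-\xi_i}$, one checks that the two weighted-space hypotheses say exactly that $g(\xi):=a(e^{-\xi_1},\ldots,e^{-\xi_n})\prod_i e^{-\sigma_i\xi_i}$ lies in $L^1(\mathbb{R}^n)\cap L^2(\mathbb{R}^n)$, that $M(c+i\tau)$ is the Fourier transform of $g$ up to the frequency dilation $\tau\mapsto\alpha\tau$, and that the truncated Mellin integral above is, after this substitution and up to the factor $\prod_i e^{\sigma_i\xi_i}$, the spherically truncated Fourier inversion integral of $\widehat g$. Since $g\in L^2$, the Fourier inversion integral converges to $g$ in $L^2(\mathbb{R}^n)$, which after undoing the substitution is convergence in $L^2\bigl(\mathbb{R}^n_+,\,x_1^{2\alpha(c_1-1)+1}\cdots x_n^{2\alpha(c_n-1)+1}\bigr)$ --- the very space containing $a$ --- to the function $a$. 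Reading the kernel integral in the stated order (truncate the contour, integrate against $\Pi_\alpha[a](p,1)$, then let $R\to\infty$) this proves the first inversion formula, the equality holding in $L^2$ for the indicated weight, hence almost everywhere (and everywhere if $a$ is in addition continuous with $\mathcal{M}a$ integrable on the contour). The parenthetical formula for $R_\alpha[a]$ follows by substituting $\Pi_\alpha[a](p,1)=\int_0^1\!\int_0^t R_\alpha[a](p,s)\,ds\,dt$ --- which holds because $\partial^2_{p_0}\Pi_\alpha[a]=R_\alpha[a]$ and $\Pi_\alpha[a](p,+0)=\partial_{p_0}\Pi_\alpha[a](p,+0)=0$, as established in the proof of Theorem~\ref{agal.th.chprod} --- and interchanging the $p$-integration with the integrations in $s$ and $t$.

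The main obstacle is not a single hard estimate but the ordering of the conditionally convergent limit in $R$ relative to the $p$-integration: every interchange used must take place over a genuinely absolutely convergent double integral, which is why the truncated contour is integrated against $\Pi_\alpha[a](p,1)$ before one lets $R\to\infty$, and the concluding step must be stated as an $L^2$ (Mellin--Plancherel) identity, since under the bare hypotheses the untruncated inversion integral need not converge pointwise.
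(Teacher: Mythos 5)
Your proposal is correct and follows essentially the same route as the paper: the factorization identity for $\int p^{-t}\Pi\,dp$ via Lemma~\ref{agal.lm.integral}(2) (your $I(z)$ is the paper's formula~\eqref{inv.factorformula} after the substitution $p_k=q_k^{1/\alpha}$, bypassing the auxiliary density $a_*$), followed by the logarithmic change of variables, the $L^1\cap L^2$ verification, and $L^2$ Fourier (Mellin--Plancherel) inversion with ball-truncated contours, and finally $\Pi_\alpha[a](p,p_0)=\int_0^{p_0}\!\int_0^t R_\alpha[a](p,s)\,ds\,dt$ for the parenthetical formula. Your explicit remarks on the order of the $R$-limit versus the $p$-integration and on the $L^2$ (rather than pointwise) sense of the final identity are sound and in fact make precise two points the paper leaves implicit.
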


\begin{proof}
For brevity denote $\Pi(p,p_0) = \Pi_\alpha[a](p,p_0)$. By definition
\begin{equation*}
    \Pi(p,p_0) = \int\limits_{\mathbb{R}^n_{+}} \left(p_0 - \left((p_1 x_1)^{\alpha}+\ldots + (p_n x_n)^{\alpha} \right)^{\frac{1}{\alpha}} \right)_{+} a(x) \, dx.
\end{equation*}
Make change of variables $y_k = x_k^{\alpha}$, $k=\overline{1,n}$. The Jacobian of this transformation $x_k \mapsto y_k$ is equal to $\alpha^{-n}(x_1\ldots x_n)^{1-\alpha}$. Denoting $a_{*}(x_1^{\alpha},\ldots,x_n^{\alpha}) = {\alpha}^{-n}a(x_1,\ldots,x_n)(x_1 \ldots x_n)^{1-\alpha}$ we can write
\begin{equation*}
    \Pi(p,p_0) = \int\limits_{\mathbb{R}^n_{+}} \left( p_0 - \left( p_1^{\alpha} y_1 + \ldots + p_n^{\alpha} y_n \right)^{\frac{1}{\alpha}} \right)_{+} a_{*}(y) \, dy.
\end{equation*} 
Using this formula and the Fubini theorem we obtain
\begin{gather*}
    \int\limits_{\mathbb{R}^n_{+}} p_1^{-t_1} \ldots p_n^{-t_n} \Pi(p_1^{\frac{1}{\alpha}},\ldots,p_n^{\frac{1}{\alpha}}; 1) \, dp = \\
    = \int\limits_{\mathbb{R}^n_{+}} a_{*}(y)  \int\limits_{\mathbb{R}^n_{+}} p_1^{-t_1} \ldots p_n^{-t_n} \left( 1 - \left( p_1 y_1 + \ldots + p_n y_n \right)^{\frac{1}{\alpha}} \right)_{+} \, dp \, dy = \left\{ u_k = p_k y_k, \, k=\overline{1,n} \right\} = \\
    = \int\limits_{\mathbb{R}^n_{+}} y_{1}^{t_1-1}\ldots y_{n}^{t_n-1} a_{*}(y) \, dy \int\limits_{\mathbb{R}^n_{+}} u_1^{-t_1} \ldots u_n^{-t_n} \left(1-  \left( u_1 + \ldots + u_n \right)^{\frac{1}{\alpha}} \right)_{+} \, du.
\end{gather*}
From the latter formula it is obvious that we can obtain the Mellin transform of function~$a_\ast(x)$ using the function~$\Pi(p,p_0)$. The inversion formula is based on this fact.

From Lemma~\ref{agal.lm.integral} it follows that for $\Re t_1 < 1$, \dots, $\Re t_n < 1$ the following formula holds:
\begin{equation}
    \label{inv.factorformula}
    \begin{array}{c}
    \int\limits_{\mathbb{R}^n_{+}} p_1^{-t_1} \ldots p_n^{-t_n} \Pi(p_1^{\frac{1}{\alpha}},\ldots,p_n^{\frac{1}{\alpha}}; 1) \, dp = \\
    = \alpha B(1-t)B \left(2,\alpha\left( n - \sum\limits_{k=1}^{n} t_k \right) \right) \int\limits_{\mathbb{R}^n_{+}} y_{1}^{t_1-1}\ldots y_{n}^{t_n-1} a_{*}(y) \, dy.
    \end{array}
\end{equation}
Put $\tau = (\tau_1,\ldots,\tau_n) \in \mathbb{R}^n$. Let's prove the next formula for the Fourier tranform:
\begin{equation}
    \label{inv.fourier}
\mathcal{F} \left(a_{*}\left(e^{x_1},\ldots,e^{x_n}\right) \exp(c \cdot x) \right)(\tau) = 
\int\limits_{\mathbb{R}^n_{+}} y_{1}^{i\tau_1 + c_1-1} \ldots y_{n}^{i\tau_n + c_n-1} a_{*}(y_1,\ldots,y_n) \, dy.
\end{equation}
We have the following chain of transformations:
\begin{gather*}
    \mathcal{F} \left(a_{*}\left(e^{x_1},\ldots,e^{x_n}\right) \exp(c_1 x_1 + \ldots + c_n x_n) \right)(\tau) = \\
    = \int\limits_{\mathbb{R}^n} \exp\left( i(x_1 \tau_1 + \ldots +x_n \tau_n) + c_1 x_1 + \ldots + c_n x_n \right) a_{*}\left( e^{x_1}, \ldots, e^{x_n} \right) \, dx = \\
    = \left\{ y_k = e^{x_k}, \; x_k = \ln{y_k}, \; dx_k = y_{k}^{-1} dy_k \; k = \overline{1,n} \right\} = \\
    = \int\limits_{\mathbb{R}^n_{+}} y_{1}^{i\tau_1 + c_1-1} \ldots y_{n}^{i\tau_n + c_n-1} a_{*}(y_1,\ldots,y_n) \, dy.
\end{gather*}
Take into account that
\begin{gather*}
    \int\limits_{\mathbb{R}^n} \exp\left( c_1 x_1 + \ldots + c_n x_n \right) \left| a_{*}\left( e^{x_1}, \ldots, e^{x_n} \right) \right| \, dx = \\
    = \int\limits_{\mathbb{R}^n_{+}} y_{1}^{c_1-1} \ldots y_{n}^{c_n-1} \left|a_{*}(y_1,\ldots,y_n) \right| \, dy = \left\{ y_k = u_k^{\alpha}, \; dy_k = \alpha u_{k}^{\alpha-1} du_k, \; k = \overline{1,n} \right\} = \\
    = \int\limits_{\mathbb{R}^n_{+}} u_{1}^{\alpha (c_1-1)}\ldots u_{n}^{\alpha (c_n-1)} |a(u_1,\ldots,u_n)| \, du < \infty,
\end{gather*}
by virtue of the conditions of this theorem. Similarly,
\begin{gather*}
    \int\limits_{\mathbb{R}^n} \exp\left( 2( c_1 x_1 + \ldots + c_n x_n) \right) \left| a_{*}\left( e^{x_1}, \ldots, e^{x_n} \right) \right|^2 \, dx = \\
    =\int\limits_{\mathbb{R}^n_{+}} y_{1}^{2c_1-1}\ldots y_{n}^{2c_n-1} |a_{*}(y_1,\ldots,y_n)|^2 \, dy = \\
    =\int\limits_{\mathbb{R}^n_{+}} u_{1}^{\alpha(2c_1-1)}\ldots u_{n}^{\alpha(2c_n-1)}|a(u_1,\ldots,u_n)|^2 \alpha^{-2n} (u_1 \ldots u_n)^{2-2\alpha} \alpha^{n} (u_1\ldots u_n)^{\alpha-1} \, du = \\
    = \alpha^{-n} \int\limits_{\mathbb{R}^n_{+}} u_{1}^{2\alpha(c_1-1)+1}\ldots u_{n}^{2 \alpha(c_n-1)+1} |a(u_1,\ldots,u_n)|^2 \, du < \infty.
\end{gather*}
Thereby
\begin{equation*}
     \exp\left( c_1 x_1 + \ldots + c_n x_n \right) a_{*}\left( e^{x_1}, \ldots, e^{x_n} \right) \in L^1(\mathbb{R}^n) \cap L^2(\mathbb{R}^n).
\end{equation*}
From this it follows that the Fourier transform~\eqref{inv.fourier} exists and, by the Plancherel theorem (see [Yo]), belongs to~$L^2(\mathbb{R}^n)$.

Taking into account~\eqref{inv.factorformula} and~\eqref{inv.fourier} we obtain
\begin{gather*}
    \mathcal{F} \left(a_{*}\left(e^{x_1},\ldots,e^{x_n}\right) \exp(c_1 x_1 + \ldots + c_n x_n) \right)(\tau) = \\
     = \int\limits_{\mathbb{R}^n_{+}} \frac{ p_1^{-i\tau_1-c_1} \ldots p_n^{-i\tau_n - c_n} \, \Pi (p_1^{\frac{1}{\alpha}},\ldots,p_n^{\frac{1}{\alpha}}; 1) \, dp}{\alpha B(1-c-i\tau)B \left(2,\alpha n - \alpha(c_1 + i\tau_1 + \ldots + c_n + i\tau_n) \right)}.
\end{gather*}
Apply the inverse Fourier transform to the latter equation. Since the function above belongs to~$L^2(\mathbb{R}^n)$ the inverse Fourier transform can be computed using the following formula (see [Yo]):
\begin{gather*}
    a_{*}\left(e^{x_1},\ldots,e^{x_n}\right) \exp(c_1 x_1 + \ldots + c_n x_n) = \\
    = \frac{1}{(2\pi)^n} \lim\limits_{R \to \infty} \int\limits_{\mathbf{B}(0,R)} \int\limits_{\mathbb{R}^n_{+}} \frac{ \exp( - i x \cdot \tau) p_1^{-i\tau_1-c_1} \ldots p_n^{-i\tau_n - c_n} \, \Pi (p_1^{\frac{1}{\alpha}},\ldots,p_n^{\frac{1}{\alpha}}; 1) \, dp}{\alpha B(1-c-i\tau)B \left(2,\alpha n - \alpha(c_1 + i\tau_1 + \ldots + c_n + i\tau_n) \right)}  d\tau. 
\end{gather*}
Denote $y_k = e^{x_k}$, $k=\overline{1,n}$. The latter formula becomes
\begin{gather*}
    a_{*}(y_1,\ldots,y_n) = \\
    = \frac{1}{(2\pi)^n} \lim\limits_{R \to \infty} \int\limits_{\mathbf{B}(0,R)} \int\limits_{\mathbb{R}^n_{+}} \frac{ (p_1 y_1)^{-i\tau_1-c_1} \ldots (p_n y_n)^{-i\tau_n - c_n} \, \Pi (p_1^{\frac{1}{\alpha}},\ldots,p_n^{\frac{1}{\alpha}}; 1) \, dp}{\alpha B(1-c-i\tau)B \left(2,\alpha n - \alpha(c_1 + i\tau_1 + \ldots + c_n + i\tau_n) \right)}  d\tau. 
\end{gather*}
Now denote $y_{k} = x_{k}^{\alpha}$ and make the substitution $p_{k} = q_{k}^{\alpha}$, $k = \overline{1,n}$. Turning back from function $a_{*}(\cdot)$ to function $a(\cdot)$, we obtain
\begin{equation*}
    a(x) = \frac{\alpha^{2n-1}}{(2\pi)^n} \lim\limits_{R \to \infty} \int\limits_{\mathbf{B}(0,R)} \int\limits_{\mathbb{R}^n_{+}} \frac{ (q_1 x_1)^{-\alpha(i\tau_1+c_1-1)-1} \ldots (q_n x_n)^{-\alpha(i\tau_n + c_n-1)-1} \, \Pi (q_1,\ldots,q_n; 1) \, dq}{\alpha B(1-c-i\tau)B \left(2,\alpha n - \alpha(c_1 + i\tau_1 + \ldots + c_n + i\tau_n) \right)}  d\tau. 
\end{equation*}
Making substitution~$z_k = c_k + i \tau_{k}$, $k = \overline{1,n}$ and using the definition of kernel $K_\alpha(u;c)$ in the statement of this theorem, we obtain the required formula for inversion of transform~$\Pi_\alpha[a](p,p_0)$. The inversion formula for transform~$R_\alpha[a](p,p_0)$ immediately follows from equalities
\begin{gather}
    \label{inv.pitorad}
    \frac{\partial^2 \Pi_{\alpha}[a](p,p_0)}{\partial p_0^2} = R_\alpha[a](p,p_0), \\
    \label{inv.piinitcond}
    \Pi_{\alpha}[a](p,+0) = \frac{\partial \Pi_{\alpha}[a]}{\partial p_0}(p,+0) = 0, \;\;\; p \in \mathop{\mathrm{int}} \mathbb{R}^n
\end{gather} 
that imply $\Pi_{\alpha}[a](p,p_0) = \int_{0}^{p_0}\int_{0}^{t} R_\alpha[a](p,s) \, ds \, dt$. Let's show that equalities~\eqref{inv.pitorad}--\eqref{inv.piinitcond} hold.

The equality~\eqref{inv.pitorad} follows from the Lemma~\ref{agal.lm.proddif} and from the definition of transform~$R_\alpha[a](p,p_0)$.

Next, for any $p \in \mathop{\mathrm{int}} \mathbb{R}^n_{+}$ there exist such $R(p) > 0$ and $p_{0}^{*}(p)$ that
\begin{equation*}
    \Pi_{\alpha}[a](p,p_0) = \int\limits_{\mathbb{R}^n_{+} \cap \mathbf{B}^n(0,R(p))} (p_0 - p \odot_{\alpha} x)_{+} a(x) \, dx,
\end{equation*}
for $0<p_0<p_{0}^{*}(p)$. Write
\begin{gather*}
    0 \leqslant \left| \Pi_{\alpha}[a](p,p_0) \right| \leqslant \int\limits_{\mathbb{R}^n_{+} \cap \mathbf{B}^n(0,R(p))} (p_0 - p \odot_{\alpha} x)_{+} |a(x)| \, dx \leqslant \\ 
    \leqslant  p_{0} \int\limits_{\mathbb{R}^n_{+} \cap \mathbf{B}^n(0,R(p))} |a(x)| \, dx 
    \leqslant C p_{0} \int\limits_{\mathbb{R}^n_{+} \cap \mathbf{B}^n(0,R(p))} x_{1}^{\alpha(c_1-1)} \ldots x_{n}^{\alpha(c_n-1)} |a(x)|\,dx \leqslant \\
    \leqslant C p_{0} \| a(x) \|_{L^1 \left(\mathbb{R}^n_{+},x_{1}^{\alpha(c_1-1)} \ldots x_{n}^{\alpha(c_n-1)} \right)}
\end{gather*}
where $C>0$ is some constant. Passing $p_0 \to +0$ we obtain that $\Pi_{\alpha}[a](p,+0)~=~0$.

Now use Lemma~\ref{agal.lm.proddif} and obtain
\begin{gather*}
    0 \leqslant \left|\frac{\partial \Pi_{\alpha}[a](p,p_0)}{\partial p_0} \right| \leqslant \int\limits_{ p \odot_{\alpha} x \leqslant p_0} |a(x)| \, dx \leqslant \\
    \leqslant \int\limits_{ \mathbb{R}^n_{+} \cap \mathbf{B}^n(0,C_{1} p_0)} |a(x)| \, dx \leqslant 
    C_{2} \int\limits_{ \mathbb{R}^n_{+} \cap \mathbf{B}^n(0,C_{1} p_0)} x_1^{\alpha(c_1-1)} \ldots x_n^{\alpha(c_n-1)} |a(x)| \, dx,
\end{gather*}
where $C_1>0$, $C_2>0$ are some constants. From the relations
\begin{equation*}
    x_{1}^{\alpha(c_1-1)}\ldots x_{n}^{\alpha(c_n-1)} a(x) \in L^1 \left( \mathbb{R}^n_{+} \right)
\end{equation*}
it follows that
\begin{equation*}
    \int\limits_{ \mathbb{R}^n_{+} \cap \mathbf{B}^n(0,C_{2} p_0)} x_1^{\alpha(c_1-1)} \ldots x_n^{\alpha(c_n-1)} |a(x)| \, dx \to 0, \;\;\; p_0 \to +0.
\end{equation*}
It proves that $\frac{\partial \Pi_{\alpha}[a]}{\partial p_0}(p,+0) = 0$. Thus, the equalities~\eqref{inv.piinitcond} are proved and the theorem is proved.
\end{proof}

\section*{References}
    \begin{itemize}
    \item[\lbrack Bo\rbrack] V. I. Bogachev, Measure theory I, Springer, Berlin, 2007
    \item[\lbrack HS\rbrack] G. M. Henkin, A. A. Shananin, Bernstein theorems and Radon transform. Application to the theory of production functions, Trans. Math. Mon. 81, 1990, pp. 189--223
    \item[\lbrack KF\rbrack] A. N. Kolmogorov, S. V. Fomin, Elements of the theory of functions and functional analysis, Dover Publications, 1999
    \item[\lbrack KP\rbrack] S. G. Krantz, H. R. Parks, Geometric integration theory, Birkh\"auser, Boston, 2008   
    \item[\lbrack LL\rbrack] E. H. Lieb, M. Loss, Analysis, Providence, AMS, 2001
    \item[\lbrack Sc\rbrack] L. Schwartz, Th\'eorie des distributions, Hermann, Paris, 1978
    \item[\lbrack Sh\rbrack] A. A. Shananin, The investigation of the generalized model of a pure industry, Matem. Mod. 9 (10), 1997, pp. 73--82 (in Russian)
    \item[\lbrack Yo\rbrack] K. Yoshida, Functional Analysis, Springer, 1980
    \end{itemize}

\end{document}